\newtheorem{theorem}{Theorem}[section]
\newtheorem{proposition}[theorem]{Proposition}
\newtheorem{corollary}[theorem]{Corollary}
\newtheorem{lemma}[theorem]{Lemma}
\theoremstyle{definition}
\newtheorem{definition}[theorem]{Definition}
\newtheorem{example}[theorem]{Example}
\newtheorem{remark}[theorem]{Remark}
\newtheorem{theoremx}{Theorem}
\newcommand{\N}{\mathbb N}
\newcommand{\Z}{\mathbb Z}
\newcommand{\R}{\mathbb R}
\newcommand{\C}{\mathbb C}
\newcommand{\K}{\mathbb K}
\newcommand{\pol}{\mathbb \K[x_1,\ldots,x_n]}
\newcommand{\polmat}{\mathbb \K[x_{ij}|1\leq i\leq m,1\leq j\leq n]}
\newcommand{\lau}{\mathbb \K[t_1^{\pm},\ldots,t_d^{\pm}]}
\newcommand{\kg}{\K[t^{\Gamma}]}
\newcommand{\V}{\mathbf{V}}
\newcommand{\A}{\mathcal{A}}
\newcommand{\sd}{\check{\sigma}}
\newcommand{\Ga}{\Gamma}
\newcommand{\ga}{\gamma}
\newcommand{\conv}{\operatorname{Conv}}
\newcommand{\sing}{\operatorname{Sing}}
\newcommand{\ch}{\operatorname{char}}
\newcommand{\Gr}{\operatorname{Gr}}
\newcommand{\Nw}{\mathcal N(\Ga_p)}
\newcommand{\hi}{\mathcal{H}}
\newcommand{\jp}{\mathcal{J}_p}
\newcommand{\jo}{\mathcal{J}_0}
\newcommand{\ig}{I_{\Gamma}}
\newcommand{\xg}{X_{\Gamma}}
\newcommand{\mat}{M_{m,n}}
\newcommand{\mt}{M_{m,n}^t}
\newcommand{\md}{M_{m,n}^2}
\newcommand{\lmn}{L_{m,n}}
\newcommand{\tlmu}{\tilde{L}_{m,n-1}}
\newcommand{\lmu}{L_{m,n-1}}
\newcommand{\lmd}{L_{m,2}}
\newcommand{\mb}{\mbox{ }}
\begin{document}

\title[Nash blowups 2-determinantal varieties]{Nash blowups of 2-generic determinantal varieties in positive characteristic}


\author{Tha\'is M. Dalbelo}
\author{Daniel Duarte}
\author{Maria Aparecida Soares Ruas}

\thanks{T. M. Dalbelo was supported by FAPESP-Grant 2019/21181-0 and by CNPq grant 403959/2023-3.}
\thanks{D. Duarte was supported by CONAHCYT project CF-2023-G-33 and PAPIIT grant IN117523.}
\thanks{M. A. S. Ruas was supported by FAPESP-Grant 2019/21181-0 and by CNPq grant “Bolsa de Produtividade em Pesquisa” 305695/2019-3.}

\address{Tha\'is M. Dalbelo, Universidade Federal de Sao Carlos}
\email{thaisdalbelo@ufscar.br}

\address{Daniel Duarte, Centro de Ciencias Matem\'aticas, UNAM}
\email{adduarte@matmor.unam.mx}

\address{Maria Aparecida Soares Ruas, Instituto de Ciências Matem\'aticas e de Computação, Universidade de São Paulo}
\email{maasruas@icmc.usp.br}

\subjclass[2020]{14E15,14M12,14M25}
\keywords{Nash blowup, generic determinantal variety, toric variety, positive characteristic fields}

\date{\today}

\dedicatory{To Mark Spivakovsky, on the occasion of his 64th anniversary.}

\begin{abstract}
We show that the Nash blowup of 2-generic determinantal varieties over fields of positive characteristic is non-singular. We prove this in two steps. Firstly, we explicitly describe the toric structure of such varieties. Secondly, we show that in this case the combinatorics of Nash blowups are free of characteristic. The result then follows from the analogous result in characteristic zero proved by W. Ebeling and S. M. Gusein-Zade.
\end{abstract}

\maketitle



\section*{Introduction}

The Nash blowup of an algebraic variety is a modification that replaces singular points by the tangency at nearby smooth points. It has been proposed to resolve singularities by iterating this process \cite{S,No}. This question has been extensively studied over fields of characteristic zero \cite{No,R,GS1,GS2,Hi,Sp,GS3,EGZ,At,GM,GT,D1,Cha,DG}.

In the case of positive characteristic fields, the study of Nash blowups was discouraged by an example of A. Nobile in which the Nash blowup turns out to be isomorphic to the initial singular variety \cite{No}. This undesired behaviour was overcome by adding the condition of normality. More precisely, the Nash blowup of a normal and singular variety over fields of positive characteristic is not an isomorphism \cite{DN1}. That result was the starting point for a renewed interest in the study of Nash blowups in positive characteristic. The first case that was explored in this context was the case of toric varieties.

The study of Nash blowups of toric varieties over fields of characteristic zero was initiated by G. Gonz\'alez-Sprinberg. He gave a combinatorial description of the Nash blowup of a toric variety by using the so-called logarithmic Jacobian ideal. Using that description, Gonz\'alez-Sprinberg showed that iterated normalized Nash blowups gives a resolution of singularities of normal toric surfaces \cite{GS1}. 

The positive characteristic versions of Gonz\'alez-Sprinberg's results were recently obtained by D. Duarte, J. Jeffries, and L. N\'{u}\~nez-Betancourt \cite{DJNB}. Firstly, the corresponding combinatorial description followed by means of an analogous of the logarithmic Jacobian ideal that takes into account the characteristic. Secondly, it was shown that the combinatorial object describing the Nash blowup of a normal toric surface is the same regardless of the characteristic. We refer to this fact by saying that \textit{the combinatorics of Nash blowups are free of characteristic}. As a consequence, iterated normalized Nash blowups gives a resolution of normal toric surfaces in positive characteristic as well.

In a different direction, the work of W. Ebeling and S. M. Gusein-Zade shows that the Nash blowup of a generic determinantal variety is non-singular in characteristic zero \cite{EGZ}. Generic determinantal varieties are defined in spaces of matrices in terms of vanishing of minors. This family of varieties has been extensively studied (see, for instance, \cite{ACGH,BV,EH}).

In this paper we show the positive characteristic version of W. Ebeling and S. M. Gusein-Zade's result in the case of 2-generic determinantal varieties. Our interest in this particular case comes from the fact that 2-generic determinantal varieties are also toric varieties. Hence we can use the combinatorial tools previously discussed.

The proof of our main theorem is divided in two main steps. We first describe in toric terms the 2-generic determinantal varieties. For that goal we introduce an explicit set of vectors whose associated toric varieties are the 2-generic determinantal varieties. 

\begin{theoremx}[see Theorem \ref{M2 toric}]\label{theorem 1}
Let $m,n\in\N$, $m,n\geq2$. Denote as $\md$ the corresponding 2-generic determinantal variety. Let $\Ga\subset\Z^{m+n-1}$ be the semigroup generated by the following set of vectors:
\begin{align}
\A=\{e_1&,e_2,\ldots,e_m,e_{m+1},e_{m+2},\ldots,e_{m+n-1},\notag\\
-&e_1+e_2+e_{m+1},-e_1+e_3+e_{m+1},\ldots,-e_1+e_m+e_{m+1},\notag\\
-&e_1+e_2+e_{m+2},-e_1+e_3+e_{m+2},\ldots,-e_1+e_m+e_{m+2},\notag\\
&\vdots\notag\\
-&e_1+e_2+e_{m+n-1},-e_1+e_3+e_{m+n-1},\ldots,-e_1+e_m+e_{m+n-1}\}.\notag
\end{align}
Then $\md$ coincides with the toric variety defined by $\Ga$.
\end{theoremx}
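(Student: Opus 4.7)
The plan is to exhibit an explicit surjective $\K$-algebra homomorphism from the coordinate ring of the ambient matrix space onto the semigroup algebra $\K[\Ga]$, show that the ideal of $2\times 2$ minors lies in its kernel, and then promote surjectivity to isomorphism via a dimension count.

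First I would pair the $mn$ generators listed in $\A$ bijectively with the $mn$ matrix entries $x_{i,j}$ by
\begin{align*}
x_{i,1} &\longmapsto t^{e_i} \quad (1\leq i\leq m),\\
x_{1,j} &\longmapsto t^{e_{m+j-1}} \quad (2\leq j\leq n),\\
x_{i,j} &\longmapsto t^{-e_1+e_i+e_{m+j-1}} \quad (2\leq i\leq m,\ 2\leq j\leq n).
\end{align*}
The labelling is motivated by the standard rank-one parametrization $(u,v)\mapsto (u_iv_j)$ of $\md$, normalized so that $u_1=1$: setting $t_1=u_1v_1$, $t_i=u_iv_1$ for $i\geq 2$, and $t_{m+j-1}=u_1v_j$ for $j\geq 2$, one recovers $u_iv_j = t_1^{-1}t_it_{m+j-1}$. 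This defines a ring map $\phi\colon\K[x_{i,j}]\to\K[t_1^{\pm},\ldots,t_{m+n-1}^{\pm}]$ whose image is by construction the semigroup algebra $\K[\Ga]$, hence $\phi$ surjects onto $\K[\Ga]$.

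Next I would verify that every $2\times 2$ minor $x_{i,j}x_{k,l}-x_{i,l}x_{k,j}$ lies in $\ker\phi$. This reduces to a short monomial identity in $\K[t_1^{\pm},\ldots,t_{m+n-1}^{\pm}]$; splitting into cases according to how many of $i,k,j,l$ equal $1$, each case is mechanical (and the cancellation of the $t_1^{-1}$ factors is what forces the specific shape of the generators in $\A$). Hence $\phi$ factors through $\K[x_{i,j}]/I_2$ and induces a surjection $\overline{\phi}\colon\K[x_{i,j}]/I_2\twoheadrightarrow\K[\Ga]$, where $I_2$ denotes the ideal of $2\times 2$ minors.

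The crucial step is to upgrade $\overline{\phi}$ to an isomorphism. For this I invoke the classical fact that $I_2$ is prime, with $\dim\K[x_{i,j}]/I_2 = m+n-1$. On the other side, since $\A$ contains the standard basis vectors $e_1,\ldots,e_{m+n-1}$, we have $\Z\Ga=\Z^{m+n-1}$, so $\K[\Ga]\hookrightarrow\K[t_1^{\pm},\ldots,t_{m+n-1}^{\pm}]$ is an integral domain of Krull dimension $m+n-1$. A surjection between finitely generated $\K$-algebras that are domains of equal finite Krull dimension must be an isomorphism, giving $\md=\xg$. The main obstacle is not conceptual but notational: identifying the correct correspondence between variables and generators so that the minor verifications collapse cleanly. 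Once this is set up and the classical primality and dimension of $I_2$ are invoked, the argument is essentially computational.
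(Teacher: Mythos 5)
Your proposal is correct and follows essentially the same route as the paper: the identical assignment of matrix entries to semigroup generators, the same case-by-case verification that the $2\times2$ minors map to zero (giving $J_2\subseteq\ig$), and then a conclusion by equal dimension $m+n-1$ on both sides — you phrase this last step ring-theoretically (a surjection of finitely generated domains of equal Krull dimension is an isomorphism) whereas the paper phrases it geometrically (an inclusion of irreducible varieties of equal dimension is an equality), but these are the same argument.
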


With this description at hand we show that, like in the case of normal toric surfaces, the combinatorics of Nash blowups of 2-generic determinantal varieties are free of characteristic. The following theorem is then a consequence of W. Ebeling and S. M. Gusein-Zade's result. It is important to emphasize that our result does not require to apply a normalized Nash blowup, as in the case of normal toric surfaces.

\begin{theoremx}[see Theorem \ref{Nash M2 smooth}]\label{theorem 2}
Assume that $\ch(\K)>0$. The Nash blowup of $\md$ is non-singular.
\end{theoremx}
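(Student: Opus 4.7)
The plan is to follow the two-step strategy outlined in the introduction. By Theorem A, the variety $\md$ coincides with the toric variety $X_{\Gamma}$ associated to the semigroup $\Gamma$ generated by the explicit set $\A\subset\Z^{m+n-1}$. By the work of Duarte, Jeffries, and Núñez-Betancourt, the Nash blowup of a not necessarily normal toric variety admits a combinatorial description in arbitrary characteristic as the blowup of a logarithmic Jacobian ideal: with $d=m+n-1=\rank\Gamma$, this ideal is generated by the monomials $t^{a_{i_1}+\cdots+a_{i_d}}$ indexed by those $d$-element subsets $\{a_{i_1},\ldots,a_{i_d}\}\subset\A$ whose determinant is non-zero in $\K$. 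The resulting modification is covered by affine toric charts, one for each such $d$-subset, with an explicit semigroup depending only on $\A$ and on the collection of admissible subsets.

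The core of the argument is to verify that, for the specific set $\A$ of Theorem A, the collection of $d$-subsets with non-zero determinant does not depend on $\ch(\K)$. I would prove this by showing that every $d\times d$ minor of the matrix whose columns are the vectors of $\A$ belongs to $\{-1,0,1\}$, i.e.\ that this matrix is totally unimodular at top order. The structure of $\A$ is highly constrained: the first $m+n-1$ columns form the identity matrix, while each remaining column has exactly three non-zero entries in $\{-1,+1\}$, with a $-1$ in coordinate $1$, a $+1$ in some coordinate $2\leq i\leq m$, and a $+1$ in some coordinate $m+1\leq m+j\leq m+n-1$. Fixing a $d$-subset, I would expand successively along the rows indexed by $\{m+1,\ldots,m+n-1\}$, then by $\{2,\ldots,m\}$, and finally row $1$, using the standard-basis columns present in the subset to clear entries. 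Each such expansion either produces a factor of $\pm 1$ or forces the minor to vanish, reducing any $d\times d$ minor to a product of $\pm 1$ factors. This is the main obstacle and is essentially a careful combinatorial bookkeeping on the sparse structure of $\A$.

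Granted this unimodularity, the generators of the logarithmic Jacobian ideal of $X_{\Gamma}$ are exactly the same in characteristic zero and in characteristic $p>0$: the combinatorics of the Nash blowup is free of characteristic. Consequently, the Nash blowup of $\md$ in positive characteristic is covered by the same affine toric charts, with the same defining semigroups, as in characteristic zero. Since smoothness of an affine toric variety is a purely combinatorial property of its semigroup (the generators must form part of a $\Z$-basis of the ambient lattice) and is independent of the base field, the theorem of Ebeling and Gusein-Zade, which asserts that the Nash blowup of $\md$ is non-singular in characteristic zero, implies that each such chart is combinatorially smooth. Hence every chart is smooth in positive characteristic as well, and the Nash blowup of $\md$ is non-singular, as claimed.
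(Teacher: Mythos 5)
Your proposal follows the paper's proof essentially verbatim: identify $\md$ with $X_{\Gamma}$ via Theorem A, prove that every maximal minor of the matrix of generators lies in $\{-1,0,1\}$ (the paper's Proposition \ref{minors}, established by induction on $n$ with column operations rather than your row-expansion scheme, but to the same effect), conclude that the logarithmic Jacobian ideal modulo $p$ and hence all the combinatorial chart data are independent of the characteristic, and import smoothness from the Ebeling--Gusein-Zade theorem via the combinatorial smoothness criterion. The only imprecision is that the affine charts of the blowup are indexed by the vertices of the Newton polyhedron $\Nw$ rather than by the admissible $d$-subsets themselves, but since that polyhedron is determined by the same characteristic-free data, the argument is unaffected.
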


The paper is divided as follows. In the first section we recall the basics of toric and generic determinantal varieties. We also prove Theorem \ref{theorem 1}. Section 2 is devoted to explain the combinatorial descripcion of Nash blowups of toric varieties over zero and prime characteristic fields. Finally, we prove Theorem \ref{theorem 2} in Section 3.



\section{Toric and generic determinantal varieties}

Throughout this note $\K$ denotes an algebraically closed field of arbitrary characteristic.
\\

Let us start by recalling the definition of a toric variety.

\begin{definition}\cite{St}\label{def toric}
Let $\Gamma\subseteq\Z^d$ be a semigroup generated by a finite set of vectors $\A=\{\gamma_1,\ldots,\gamma_n\}$. Consider the $\K$-algebra homomorphism $\pi_{\Gamma}:\pol\to\lau$, $x_i\mapsto t^{\gamma_i}$. Let $\ig=\ker\pi_{\Gamma}$. The variety $\xg=\V(\ig)\subseteq\K^n$ is called the toric variety defined by $\Gamma$. We denote as $\kg$ the image of $\pi_{\Gamma}$, which is the coordinate ring of $\xg$.
\end{definition}

The following are basic properties of toric varieties that we will use.

\begin{proposition}\label{prop toric}\cite[Chapter 13]{St}
Consider the notation of Definition \ref{def toric}.
\begin{enumerate}
\item $\xg$ is irreducible.
\item If $\Z\A=\Z^d$ then $\xg$ has dimension $d$.
\item $\xg$ is a normal variety if and only if $\Ga=\R_{\geq0}\A\cap\Z^d$.
\end{enumerate}
\end{proposition}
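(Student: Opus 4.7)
The plan is to handle each of the three parts in turn, exploiting the fact that $\xg$ is naturally the spectrum of the semigroup algebra $\K[\Ga]$, the image of $\pi_\Ga$.

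For (1), I would simply observe that $\im(\pi_\Ga)$ is a subring of the Laurent polynomial ring $\lau$, which is a domain. Hence $\ig = \ker(\pi_\Ga)$ is a prime ideal of $\pol$, so $\xg = \V(\ig)$ is irreducible.

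For (2), the plan is to pass through the isomorphism $\pol/\ig \cong \K[\Ga]$ and compute Krull dimension. Under the hypothesis $\Z\A=\Z^d$, every $t_i^{\pm1}$ is a quotient of two monomials $t^{\ga_j}$, so the fraction field of $\K[\Ga]$ coincides with $\K(t_1,\ldots,t_d)$. Since $\K[\Ga]$ is a finitely generated $\K$-algebra which is a domain, its Krull dimension equals the transcendence degree of its fraction field over $\K$, which is $d$. Therefore $\dim \xg = \dim \K[\Ga] = d$.

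For (3), I would use the classical characterization that affine semigroup rings are normal precisely when the semigroup is saturated. The ``if'' direction proceeds by verifying directly that $\K[\R_{\geq0}\A\cap\Z^d]$ is integrally closed: one shows its fraction field is $\K(t_1,\ldots,t_d)$, and then that any element of $\K(t_1,\ldots,t_d)$ integral over it must be a $\K$-linear combination of monomials $t^{\ga}$ with $\ga\in \R_{\geq 0}\A\cap\Z^d$. The ``only if'' direction requires identifying the integral closure of $\K[\Ga]$ with $\K[\R_{\geq0}\A\cap\Z^d]$: given $\ga\in\R_{\geq0}\A\cap\Z^d$, one checks that $k\ga\in\Ga$ for some positive integer $k$ (since $\ga$ is a nonnegative rational combination of the $\ga_i$), so $t^{\ga}$ satisfies the monic relation $x^k-t^{k\ga}=0$ over $\K[\Ga]$ and therefore lies in the integral closure. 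The technical input needed to keep this well-behaved is Gordan's lemma, which ensures that $\R_{\geq0}\A\cap\Z^d$ is a finitely generated semigroup, so that $\K[\R_{\geq0}\A\cap\Z^d]$ is itself a finitely generated $\K$-algebra of the same form as $\K[\Ga]$.

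The main obstacle is (3), and specifically the ``only if'' direction: producing the correct candidate for the integral closure and verifying that it really is integrally closed. Both the applications of Gordan's lemma and the concrete description of normal semigroup rings via their cones are nontrivial, and this is why the authors simply cite Sturmfels rather than reproduce the argument.
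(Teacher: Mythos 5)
The paper offers no proof of this proposition at all --- it is quoted verbatim from Sturmfels \cite[Chapter 13]{St} --- so the only comparison available is with the standard textbook argument, which is essentially what you sketch. Parts (1) and (2) are fine as written: $\ig$ is prime because the image of $\pi_\Ga$ sits inside the domain $\lau$, and when $\Z\A=\Z^d$ the fraction field of $\K[\Ga]$ is all of $\K(t_1,\ldots,t_d)$, so Noether normalization (Krull dimension equals transcendence degree for finitely generated domains) gives $\dim\xg=d$.

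In part (3) there is one point you must make explicit, in the ``only if'' direction. You argue that for $\ga\in\Rz\A\cap\Z^d$ the element $t^{\ga}$ satisfies $x^k-t^{k\ga}=0$ and hence lies in the integral closure of $\K[\Ga]$. Integrality alone is not enough: normality concerns the integral closure \emph{in the fraction field}, so you also need $t^{\ga}\in\operatorname{Frac}(\K[\Ga])$, i.e.\ $\ga\in\Z\A$. If $\Z\A\subsetneq\Z^d$ this genuinely fails: take $d=1$ and $\A=\{2\}$, so that $\K[\Ga]=\K[t^2]$ is a polynomial ring and hence normal, yet $\Ga=2\N\neq\N=\Rz\A\cap\Z$. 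Thus the equivalence as stated requires the standing hypothesis $\Z\A=\Z^d$ (equivalently, the saturation should be taken inside $\Z\A$, which is how the source states it); this is really an imprecision inherited from the paper's formulation, and it is harmless there since $\Z\A=\Z^{m+n-1}$ for every semigroup the paper uses. Once that hypothesis is in force, $\Rz\A\cap\Z^d\subseteq\Z\A$, every such $t^{\ga}$ does lie in the fraction field, and the remaining ingredients of your sketch --- rationality of the cone coefficients (Carath\'eodory over $\Q$), Gordan's lemma for finite generation, and the direct verification that $\K[\Rz\A\cap\Z^d]$ is integrally closed --- complete the standard and correct proof.
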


Now we recall the definition of a generic determinantal variety.

\begin{definition}\cite{ACGH}\label{gen det}
Let $\mat$ be the $\K$-vector space of $(m\times n)$-matrices with entries in $\K$. Let $\mt$ be the subset of $\mat$ consisting of matrices of rank less than $t$, that is, the matrices all of whose $(t\times t)$-minors vanish. $\mt$ is called a \textit{generic determinantal variety}. 

In other words, let $L=(x_{ij})_{1\leq i\leq m,1\leq j\leq n}$, where $x_{ij}$ are indeterminates, and denote as $J_t\subset\polmat$ the ideal generated by all $(t\times t)$-minors of $L$. Then $\mt=\V(J_t)\subset\K^{mn}$.
\end{definition}

The following are basic properties of generic determinantal varieties that we will use.

\begin{proposition}\label{prop det}
Consider the notation of Definition \ref{gen det}.
\begin{enumerate}
\item $J_t$ is a prime ideal. In particular $\mt$ is an irreducible variety.
\item The dimension of $\mt$ is $mn-(m-t+1)(n-t+1)$.
\item $\mt$ is a normal Cohen-Macaulay variety.
\end{enumerate}
\end{proposition}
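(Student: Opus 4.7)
All three statements are classical results in the theory of determinantal varieties, and in practice one would simply cite Bruns--Vetter \cite{BV}; still, here is how I would approach them from first principles.

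For (1) I would realize $\mt$ as the closure of the image of the product map
\[
\varphi:\K^{m(t-1)}\times\K^{(t-1)n}\longrightarrow\K^{mn},\quad (A,B)\mapsto AB.
\]
A matrix has rank less than $t$ exactly when it admits a factorization through $\K^{t-1}$, so the image of $\varphi$ is set-theoretically $\mt$. Since the source is irreducible, $\mt$ is irreducible, and hence $\sqrt{J_t}$ is prime. The delicate statement is that $J_t$ itself is already prime (no nilpotents modulo the minors); the standard proof is via Hochster--Eagon's principal radical system technique, and this is where I expect the main technical obstacle.

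For (2) I would compute $\dim\mt$ using generic fibers of $\varphi$. A rank-$(t-1)$ matrix $M$ admits a factorization $M=AB$ with $A$ of full column rank, and any two such factorizations differ by an element of $\mathrm{GL}_{t-1}(\K)$ acting by $(A,B)\mapsto(Ag^{-1},gB)$. The generic fiber therefore has dimension $(t-1)^2$, and
\[
\dim\mt=m(t-1)+n(t-1)-(t-1)^2=mn-(m-t+1)(n-t+1).
\]

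For (3), Cohen--Macaulayness is again a theorem of Hochster--Eagon and follows from the same principal radical system setup that yields (1). Normality then follows from Serre's criterion: Cohen--Macaulayness supplies $(S_2)$, and $(R_1)$ is checked by observing that $\sing(\mt)=M_{m,n}^{t-1}$, whose codimension in $\mt$ is at least $2$ by the dimension formula in (2). Alternatively, normality is a consequence of the fact that $\mt$ has rational singularities (Kempf; De Concini--Strickland). As noted, the efficient path is simply to cite Bruns--Vetter.
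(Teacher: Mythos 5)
Your proposal is correct and matches the paper's treatment: the paper states Proposition \ref{prop det} without proof, simply referring the reader to \cite{BV} and \cite{EH}, which is exactly the citation you recommend. Your supplementary sketch (the factorization map $(A,B)\mapsto AB$ for irreducibility and the dimension count, Hochster--Eagon for primality and Cohen--Macaulayness, and Serre's criterion with $\sing(\mt)=M_{m,n}^{t-1}$ for normality) is a faithful outline of the standard arguments and contains no gaps.
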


We refer to \cite{BV, EH} for properties of determinantal varieties over commutative rings.

We are interested in studying the case $t=2$. One special feature of this particular case is the fact that $\md$ is a binomial variety, that is, its defining equations are binomials. Our first goal is to prove that $\md$ is actually a normal toric variety. We do this by describing explicitly its corresponding semigroup. Before going into this discussion, we present two examples.

\begin{example}\label{exam 1}
Let $M^2_{2,2}=\V(x_1x_4-x_2x_3)\subset\K^4$. In this case it is known that $M^2_{2,2}=\xg$, where $\Ga$ is the semigroup generated by the set of vectors $\{e_1,e_2,e_3,-e_1+e_2+e_3\}\subset\Z^3$ \cite[Example 1.1.18]{CLS}. Here $\{e_1,e_2,e_3\}$ denotes the canonical basis of $\Z^3$.
\end{example}

\begin{example}\label{exam 2}
Consider $M^2_{2,3}$. Let $f_1=x_1x_4-x_2x_3$, $f_2=x_1x_6-x_2x_5$, and $f_3=x_3x_6-x_4x_5$, which are the $(2\times 2)$-minors of the following matrix:
\[
\begin{pmatrix}
x_1& 	x_3& 	x_5\\
x_2&  	x_4&   x_6
\end{pmatrix}.
\]
By definition, $M^2_{2,3}=\V(f_1,f_2,f_3)\subset\K^6$. By Proposition \ref{prop det}, $\dim M^2_{2,3}=4$. Let $\Ga\subset\Z^4$ be the semigroup generated by $\A=\{\ga_1,\ldots,\ga_6\}$, where
\begin{align}
\ga_1&=e_1,\notag\\
\ga_2&=e_2,\notag\\
\ga_3&=e_3,\notag\\
\ga_4&=-e_1+e_2+e_3,\notag\\
\ga_5&=e_4,\notag\\
\ga_6&=-e_1+e_2+e_4.\notag
\end{align}
We show that $M^2_{2,3}=\xg$. Let $\ig\subset\K[x_1,\ldots,x_6]$ be the corresponding toric ideal. Since $\Z\A=\Z^4$, Proposition \ref{prop toric} gives $\dim \xg=4$. A straightforward computation shows that $f_1,f_2,f_3\in\ig$. Hence, $\xg\subset M^2_{2,3}\subset\K^6$. These varieties are irreducible and have the same dimension. We conclude that $M^2_{2,3}=\xg$.
\end{example}

The previous examples are particular cases of the following result.

\begin{theorem}\label{M2 toric}
Let $m,n\in\N$, $m,n\geq2$. Let $\Ga\subset\Z^{m+n-1}$ be the semigroup generated by the following set of vectors:
\begin{align}
\A=\{e_1&,e_2,\ldots,e_m,e_{m+1},e_{m+2},\ldots,e_{m+n-1},\notag\\
-&e_1+e_2+e_{m+1},-e_1+e_3+e_{m+1},\ldots,-e_1+e_m+e_{m+1},\notag\\
-&e_1+e_2+e_{m+2},-e_1+e_3+e_{m+2},\ldots,-e_1+e_m+e_{m+2},\notag\\
&\vdots\notag\\
-&e_1+e_2+e_{m+n-1},-e_1+e_3+e_{m+n-1},\ldots,-e_1+e_m+e_{m+n-1}\}.\notag
\end{align}
Then $\md=\xg$.
\end{theorem}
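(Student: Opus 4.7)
The plan is to adapt the strategy used in Examples \ref{exam 1} and \ref{exam 2} to the general case: set up an explicit bijection between the matrix entries and the generators in $\A$, show that the defining $(2\times 2)$-minors lie in the toric ideal $\ig$, and conclude by a dimension comparison using irreducibility of both sides.

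\textbf{Step 1 (the correspondence).} Note first that $|\A|=m+(n-1)+(m-1)(n-1)=mn$, which matches the number of entries of $L=(x_{ij})$. I would label the generators of $\A$ and the variables of $\polmat$ via
\begin{align*}
x_{i,1}&\longmapsto e_i \qquad (1\leq i\leq m),\\
x_{1,j}&\longmapsto e_{m+j-1} \qquad (2\leq j\leq n),\\
x_{i,j}&\longmapsto -e_1+e_i+e_{m+j-1} \qquad (2\leq i\leq m,\ 2\leq j\leq n).
\end{align*}
This is consistent with Examples \ref{exam 1} and \ref{exam 2}, where the same pattern is already spelled out.

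\textbf{Step 2 (dimensions agree).} Since $e_1,\ldots,e_{m+n-1}\in\A$ we have $\Z\A=\Z^{m+n-1}$, so Proposition \ref{prop toric}(2) gives $\dim\xg=m+n-1$. On the other hand, Proposition \ref{prop det}(2) with $t=2$ yields
\[
\dim\md=mn-(m-1)(n-1)=m+n-1.
\]
Both dimensions coincide.

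\textbf{Step 3 (minors vanish under $\pi_\Gamma$).} The ideal $J_2$ is generated by the binomials $f=x_{i,j}x_{k,l}-x_{i,l}x_{k,j}$ with $1\leq i<k\leq m$ and $1\leq j<l\leq n$. I would verify $\pi_\Gamma(f)=0$ by splitting into four cases according to whether the index $1$ occurs among the row labels $\{i,k\}$ and column labels $\{j,l\}$. In each case the pattern is the same: the two monomials map to a common element of $\lau$. For instance, when $i=1,j=1$ one gets $\pi_\Gamma(x_{1,1}x_{k,l})=t^{e_1}t^{-e_1+e_k+e_{m+l-1}}=t^{e_k+e_{m+l-1}}=\pi_\Gamma(x_{1,l}x_{k,1})$, and the remaining three cases are analogous (the terms $-e_1$ appear symmetrically on both sides and cancel in pairs). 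Thus $J_2\subseteq\ig$, hence $\xg\subseteq\md$.

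\textbf{Step 4 (conclusion).} Both $\xg$ and $\md$ are irreducible (Propositions \ref{prop toric}(1) and \ref{prop det}(1)) and share the dimension $m+n-1$; since one contains the other, they must coincide.

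The only real obstacle is the bookkeeping of Step~3; conceptually nothing is difficult, but one has to treat separately whether each of $i,k,j,l$ equals $1$ (since the formula for the image of $x_{i,j}$ depends on that). Once the four cases are checked, the dimension-plus-irreducibility argument closes the proof immediately.
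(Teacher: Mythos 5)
Your proposal is correct and follows essentially the same route as the paper's proof: the same labelling of the generators by matrix entries, the same four-case verification that the $(2\times 2)$-minors lie in $\ig$ (hence $\xg\subseteq\V(J_2)=\md$), and the same conclusion via irreducibility plus the dimension count $mn-(m-1)(n-1)=m+n-1$. No substantive differences to report.
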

\begin{proof}
Denote the elements of $\A$ as follows:
\begin{align}
\ga_{i1}&=e_i, \mbox{ for }i\in\{1,\ldots,m\},\notag\\
\ga_{1j}&=e_{m+j-1} \mbox{ for } j\in\{2,\ldots,n\},\notag\\
\ga_{ij}&=-e_1+e_{i}+e_{m+j-1}\mbox{ for } i\in\{2,\ldots,m\} \mbox{ and } j\in\{2,\ldots,n\}.\notag
\end{align}
Recall that $J_2\subset\polmat$ denotes the ideal generated by the $(2\times 2)$-minors of the matrix $(x_{ij})_{1\leq i\leq m,1\leq j\leq n}$. Associate to each indeterminate $x_{ij}$ the vector $\ga_{ij}$ . We claim that $J_2\subset \ig$. 

To prove the claim we show that the multiplicative relation $x_{ij}x_{lk}=x_{ik}x_{lj}$, $1\leq i<l\leq m$ and $1\leq j<k\leq n$, corresponds to the additive relation $\ga_{ij}+\ga_{lk}=\ga_{ik}+\ga_{lj}$. We prove this by considering cases.
\begin{enumerate}
\item Let $i=j=1$. Then $\ga_{11}+\ga_{lk}=e_1+e_l+e_{m+k-1}-e_1$ and $\ga_{1k}+\ga_{l1}=e_{m+k-1}+e_l$.
\item Let $i\geq 2, j=1$. Then $\ga_{i1}+\ga_{lk}=-e_1+e_i+e_l+e_{m+k-1}$ and $\ga_{ik}+\ga_{l1}=-e_1+e_i+e_{m+k-1}+e_l$.
\item Let $i=1, j\geq2$. Then $\ga_{1j}+\ga_{lk}=-e_1+e_{m+j-1}+e_l+e_{m+k-1}$ and $\ga_{1k}+\ga_{lj}=-e_1+e_{m+k-1}+e_l+e_{m+j-1}$.
\item Let $i\geq 2, j\geq2$. Then $\ga_{ij}+\ga_{lk}=-e_1+e_i+e_{m+j-1}-e_1+e_l+e_{m+k-1}$ and $\ga_{ik}+\ga_{lj}=-e_1+e_i+e_{m+k-1}-e_1+e_l+e_{m+j-1}$.
\end{enumerate}
This shows the claim. Therefore, we have $\xg=\V(\ig)\subset\V(J_2)\subset \K^{mn}$. Since $\Z\A=\Z^{m+n-1}$ we have that $\dim \xg=m+n-1$, by Proposition \ref{prop toric}. On the other hand, $\dim \md=m+n-1$ by Proposition \ref{prop det}. Since these two varieties are irreducible, we conclude that $\xg=\md$.
\end{proof}

\begin{corollary}
$\xg$ is a normal variety. In particular, $\Ga$ is a saturated semigroup, i.e., $\Ga=\R_{\geq0}\A\cap\Z^{m+n-1}$. Moreover, $\R_{\geq0}\A$ is a strongly convex cone.
\end{corollary}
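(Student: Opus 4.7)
The plan is to extract all three statements directly from results already in hand. The first two follow in a single chain from Theorem \ref{M2 toric} and Proposition \ref{prop toric}(3), and the third is a short separation argument. I do not foresee any genuine obstacle.

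First I would establish normality: by Theorem \ref{M2 toric} we have $\xg = \md$, and Proposition \ref{prop det}(3) asserts that $\md$ is normal (in fact Cohen-Macaulay). Hence $\xg$ is normal. Saturation then comes for free from the equivalence stated in Proposition \ref{prop toric}(3), namely that $\xg$ is normal if and only if $\Ga = \R_{\geq 0}\A \cap \Z^{m+n-1}$; normality of $\xg$ therefore upgrades immediately to the equality $\Ga = \R_{\geq 0}\A \cap \Z^{m+n-1}$.

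For strong convexity of $\R_{\geq 0}\A$, I would exhibit a linear functional that is strictly positive on every element of $\A$, since such a functional forces the cone to meet its negative only at the origin. The natural candidate is the sum-of-coordinates functional $\ell(x_1, \ldots, x_{m+n-1}) = x_1 + \cdots + x_{m+n-1}$. Each standard basis vector $e_k \in \A$ has $\ell(e_k) = 1$, while each remaining generator $-e_1 + e_i + e_{m+j-1}$ satisfies $\ell(-e_1 + e_i + e_{m+j-1}) = -1 + 1 + 1 = 1$. Thus $\ell$ takes value $1$ on every element of $\A$, hence is strictly positive on every nonzero nonnegative combination; in particular $\R_{\geq 0}\A \cap (-\R_{\geq 0}\A) = \{0\}$, which is the definition of strong convexity.
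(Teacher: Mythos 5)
Your proof is correct. For the first two assertions you follow exactly the paper's route: normality of $\xg$ comes from $\xg=\md$ (Theorem \ref{M2 toric}) together with Proposition \ref{prop det}(3), and saturation is then immediate from the equivalence in Proposition \ref{prop toric}(3). For strong convexity, however, you take a genuinely different and more explicit path. The paper simply observes that $0\in\md=\xg$; unpacking this, the presence of the origin on the affine toric variety forces the semigroup $\Ga$ to have no nonzero invertible elements, and combined with saturation this rules out a line in $\R_{\geq0}\A$. Your argument instead exhibits the coordinate-sum functional $\ell$, checks that $\ell\equiv 1$ on every generator in $\A$ (including the generators $-e_1+e_i+e_{m+j-1}$, where $-1+1+1=1$), and concludes $\R_{\geq0}\A\cap(-\R_{\geq0}\A)=\{0\}$ directly. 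This is self-contained, avoids any appeal to the geometry of $\md$ or to the standard (and here unstated) dictionary between torus-fixed points and pointedness of the semigroup, and as a bonus produces an explicit supporting hyperplane for the cone; the paper's version is shorter but leans on that dictionary. Both are valid.
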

\begin{proof}
The first part follows from Theorem \ref{M2 toric} and Propositions \ref{prop toric} and \ref{prop det}. The second part is a consequence of the fact $0\in\md=\xg$.
\end{proof}

\begin{corollary}
$\A$ is the minimal generating set of $\Ga$.
\end{corollary}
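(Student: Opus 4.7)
The plan is to apply the standard criterion for minimality of generating sets of affine semigroups: $\A$ is a minimal generating set of $\Ga$ if and only if no element $\ga\in\A$ admits a non-trivial expression as a non-negative integer combination of elements of $\A$. I would establish this by producing a $\Z$-linear functional $L\colon \Z^{m+n-1}\to\Z$ that takes the value $1$ on every element of $\A$; this collapses the problem to a one-line counting argument.

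The natural choice is the sum-of-coordinates functional $L(v)=\sum_{k=1}^{m+n-1}v_k$. Direct inspection of the generators in Theorem \ref{M2 toric} gives $L(e_k)=1$ for each standard basis vector, and $L(-e_1+e_i+e_{m+j-1})=-1+1+1=1$ for each vector in the last block. Hence $L$ is identically $1$ on $\A$.

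Now, given $\ga\in\A$ and a representation $\ga=\sum_{\delta\in\A}c_\delta\,\delta$ with $c_\delta\in\N$, applying $L$ yields $1=\sum_{\delta\in\A}c_\delta$, so exactly one coefficient equals $1$ and all others vanish. To conclude I only need the elements of $\A$ to be pairwise distinct, which is immediate: the basis vectors are distinct among themselves; a vector of the form $-e_1+e_i+e_{m+j-1}$ has a $-1$ in the first coordinate, distinguishing it from every basis vector; and two such vectors agree only when the pairs $(i,j)$ coincide, by reading off the positions of their positive coordinates. Therefore $\ga$ equals some $\delta\in\A$, and this $\delta$ must be $\ga$ itself, ruling out any non-trivial combination.

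There is no real obstacle here: the argument is essentially a single evaluation of a linear functional. The only points to verify are the value of $L$ on the generators and the pairwise distinctness of $\A$, both of which are read off the definitions.
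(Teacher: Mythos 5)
Your proof is correct, and it takes a genuinely different route from the paper. The paper argues geometrically: it invokes the fact that the cardinality of the minimal generating set $\hi$ of $\Ga$ equals the (global) embedding dimension of $\xg$, computes the tangent space of $\md$ at the origin to get embedding dimension $mn$, and concludes $\hi=\A$ from $\hi\subset\A$ and $|\A|=mn$ by a counting argument. You instead verify minimality directly inside the semigroup: the sum-of-coordinates functional $L$ is a grading under which every element of $\A$ has degree $1$, so each generator is irreducible in $\Ga$ (any decomposition into nonzero elements of $\Ga$ would have $L$-value at least $2$), and since $\Ga$ is pointed --- which the same functional shows, as $L\geq 0$ on $\Ga$ with equality only at $0$ --- the set of irreducible elements is the unique minimal generating set. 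Your argument is purely combinatorial and self-contained: it does not rely on the identification $\md=\xg$ from Theorem \ref{M2 toric}, on the tangent-space computation for the determinantal variety, or on the dictionary between Hilbert bases and embedding dimension; as a bonus it re-proves strong convexity of $\R_{\geq0}\A$. The paper's proof is shorter given that machinery and reuses the main theorem, but imports more external facts. One small presentational point: you should make explicit (as you implicitly do) that uniqueness of the minimal generating set requires pointedness, which your functional $L$ delivers for free.
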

\begin{proof}
Let $\sigma\subset\R^{m+n-1}$ denote the dual cone of $\R_{\geq0}\A$. Notice that $\sigma$ is full-dimensional and strongly convex since the cone $\R_{\geq0}\A$ satisfy these properties. From the previous corollary we know that $\xg$ is the normal toric variety corresponding to $\sigma$. 

Let $\hi$ be the minimal generating set of $\Ga$. Denote as $T_0\xg\subset\K^{mn}$ the tangent space of $\xg$ at the origin. Then $\dim T_0\xg=|\hi|$ \cite[Lemma 1.3.10]{CLS}. We claim that $T_0\xg=\K^{mn}$. Assume the claim for the moment. Then $|\hi|=mn$. Since $\hi\subset\A$ and $|\A|=mn$ we conclude $\hi=\A$.

To prove the claim recall that $J_2$ and $\ig$ are prime ideals and $\K$ is algebraically closed. Hence, $J_2=\mathbb{I}(\md)=\mathbb{I}(\xg)=\ig$. Recall that the generators of $J_2$ are of the form $x_{ij}x_{kl}-x_{il}x_{kj}$. This implies that the Jacobian matrix of $J_2$, evaluated at $0$, is the zero matrix. In particular, its kernel is $\K^{mn}$. This proves the claim.

\end{proof}

\begin{remark}
With the previous notation, computational experimentation suggests that the dual cone of $\R_{\geq0}\A$ has as generators the following set of vectors in $\Z^{m+n-1}$:
$$\{e_2,\ldots,e_{m+n-1},e_1+e_2+\ldots+e_m, e_1+e_{m+1}+\cdots+e_{m+n-1}\}.$$
\end{remark}


\section{Nash blowups of toric varieties}

In this section we recall the definition of Nash blowups and its combinatorial description in the case of toric varieties defined over fields of arbitrary characteristic.

\begin{definition}
Let $\K$ be an algebraically closed field of arbitrary characteristic. Let $X\subseteq\K^n$ be an equidimensional algebraic variety of dimension~$d$. Consider the Gauss map:
\begin{align}
G:X\setminus\sing(X)&\rightarrow\Gr(d,n)\notag\\
x&\mapsto T_xX,\notag
\end{align}
where $\Gr(d,n)$ is the Grassmanian of $d$-dimensional vector spaces in $\K^n$, and $T_xX$ is the tangent space of $X$ at $x$. Denote by $X^*$ the Zariski closure of the graph of $G$. Call $\nu$ the restriction to $X^*$ of the projection of $X\times\Gr(d,n)$ to $X$. The pair $(X^*,\nu)$ is called the Nash blowup of $X$.
\end{definition}

The first step towards a combinatorial description of Nash blowups of toric varieties in characteristic zero is given by the so-called logarithmic Jacobian ideal. This ideal was originally introduced by G. Gonz\'alez-Sprinberg \cite[Section 2]{GS1}, and was later revisited by several authors \cite{LJ-R,GT,ChDG}.

\begin{definition}
Suppose that $\ch(\K)=0$. Let $\Gamma\subset\Z^d$ be a semigroup generated by $\A=\{\gamma_1,\ldots,\gamma_n\}$. Assume that 
$\Z\A=\Z^d$. Consider the following ideal:
$$\jo=\langle t^{\gamma_{i_1}+\cdots+\gamma_{i_d}}\,|\,\det(\gamma_{i_1}\cdots\gamma_{i_d})\neq0, \, 1\leq i_1<\cdots<i_d\leq n \rangle\subseteq\kg.$$
The ideal $\jo$ is called the \textit{logarithmic Jacobian ideal of $\xg$}.
\end{definition}

\begin{theorem}[{\cite{GS1,LJ-R,GT}}]\label{Nash=log-0}
Suppose that  $\ch(\K)=0$. The Nash blowup of $\xg$ is isomorphic to the blowup of its logarithmic Jacobian ideal.
\end{theorem}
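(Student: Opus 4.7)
The plan is to analyze the Gauss map $G\colon \xg \setminus \sing(\xg) \to \Gr(d,n)$ directly on the torus $T \subseteq \xg$ and to identify it, in Plücker coordinates, with the rational map defined by the generators of $\jo$. First, since $\Z\A = \Z^d$, the parameterization $\phi\colon (\K^*)^d \to T$, $s \mapsto (s^{\ga_1}, \ldots, s^{\ga_n})$ is an isomorphism onto a dense open subset of $\xg$. For any such $s$, the tangent space $T_{\phi(s)}\xg$ is the row span of the Jacobian matrix $d\phi(s)$, whose $(k,j)$-entry is $\ga_{j,k}\cdot s^{\ga_j}/s_k$, where $\ga_{j,k}$ denotes the $k$-th component of $\ga_j$.

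Next, the key step is a direct determinantal computation. For each $d$-subset $I = \{i_1 < \cdots < i_d\} \subseteq \{1, \ldots, n\}$, factoring $s^{\ga_{i_l}}$ out of the $l$-th column and $s_k^{-1}$ out of the $k$-th row of the corresponding $d \times d$ submatrix of $d\phi(s)$ shows that the $I$-th Plücker coordinate of $T_{\phi(s)}\xg$ equals
$$(s_1 \cdots s_d)^{-1} \cdot \det(\ga_{i_1}, \ldots, \ga_{i_d}) \cdot s^{\ga_{i_1} + \cdots + \ga_{i_d}}.$$
In homogeneous Plücker coordinates the common invertible factor $(s_1 \cdots s_d)^{-1}$ cancels. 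The Plücker coordinates indexed by subsets for which $\ga_{i_1}, \ldots, \ga_{i_d}$ are linearly dependent vanish identically, while the remaining ones agree, up to the non-zero constants $\det(\ga_{i_1}, \ldots, \ga_{i_d})$, with the generators of $\jo$.

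Finally, composing $G$ with the Plücker embedding of $\Gr(d,n)$ into $\mathbb{P}^{N-1}$, where $N$ is the number of $d$-subsets of $\{1, \ldots, n\}$, gives a rational map from $\xg$ to $\mathbb{P}^{N-1}$ whose non-vanishing defining sections are non-zero scalar multiples of the generators of $\jo$. By the universal property of blowups, the Zariski closure of the graph of $G$ is then the blowup of $\xg$ along $\jo$. The main obstacle I anticipate is the Plücker factorization above, which is mostly a bookkeeping exercise with the multilinearity of the determinant; once this identity is in hand, discarding the identically zero coordinates and absorbing the non-zero scalar factors into an automorphism of the ambient projective space yields the desired identification.
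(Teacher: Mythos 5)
The paper states this theorem purely as a citation to \cite{GS1,LJ-R,GT} and supplies no proof of its own, so there is no internal argument to compare against; your proposal is essentially the standard proof found in those references (in particular the treatment of Gonz\'alez P\'erez and Teissier): compute the Pl\"ucker coordinates of the tangent plane along the dense torus via the monomial parameterization, observe that they are, up to a common unit and the nonzero integer constants $\det(\ga_{i_1}\cdots\ga_{i_d})$ (nonzero in $\K$ precisely because $\ch(\K)=0$), the generators of $\jo$, and identify the graph closure with the blowup. The argument is correct; the only points left implicit are that the torus is dense in $\xg\setminus\sing(\xg)$, so taking the closure of the graph over the torus suffices, and that $\Gr(d,n)$ is closed in $\mathbb{P}^{N-1}$, so the two graph closures agree.
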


It is known that the previous theorem is false over fields of prime characteristic. 

\begin{example}\label{counterex}
Suppose that  $\ch(\K)=2$, $\Gamma=\N(\{2,3\})$. Then $X_{\Gamma}={\V(x^3-y^2)}$, $Bl_{\langle t^2,t^3\rangle}\xg$ is nonsingular but $\xg^*\cong\xg$ \cite[Example 1]{No}.
\end{example}

Theorem \ref{Nash=log-0} was recently generalized to positive characteristic fields as follows \cite{DJNB}.

\begin{definition}\label{log jac mod p}
Suppose that $\ch(\K)=p>0$. Let $\Gamma\subset\Z^d$ be a semigroup generated by $\A=\{\gamma_1,\ldots,\gamma_n\}$. Assume that 
$\Z\A=\Z^d$. Consider the following ideal:
$$\jp=\langle t^{\gamma_{i_1}+\cdots+\gamma_{i_d}}|\det(\gamma_{i_1}\cdots\gamma_{i_d})\neq0\, \mathrm{mod} \, p, 1\leq i_1<\cdots<i_d\leq n \rangle\subseteq\kg.$$
The ideal $\jp$ is called the \textit{logarithmic Jacobian ideal modulo $p$ of $\xg$}. 
\end{definition}

\begin{example}
Let $\Gamma=\N(\{2,3\})\subseteq\N$. Then $\mathcal{J}_2=\langle t^3 \rangle$, $\mathcal{J}_3=\langle t^2 \rangle$, and $\mathcal{J}_p=\langle t^2,t^3 \rangle$, for $p=0$ and $p\geq 5$.
\end{example}

\begin{theorem}\cite[Theorem 1.9]{DJNB}\label{Nash=log}
Suppose that  $\ch(\K)=p>0$. The Nash blowup of $\xg$ is isomorphic to the blowup of its logarithmic Jacobian ideal modulo $p$.
\end{theorem}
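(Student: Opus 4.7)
The plan is to identify $\xg^*$ with $\mathrm{Bl}_{\jp}\xg$ by computing the Gauss map explicitly on the open torus orbit of $\xg$ via the Pl\"ucker embedding of $\Gr(d,n)$, and then recognizing the resulting data as the rational map associated with $\jp$. First, I parametrize the dense torus orbit by $\varphi\colon(\K^*)^d\to\xg$, $t\mapsto(t^{\gamma_1},\dots,t^{\gamma_n})$, and form the Jacobian matrix whose $(k,j)$ entry is $\partial t^{\gamma_j}/\partial t_k=\gamma_{j,k}\,t^{\gamma_j-e_k}$; its rows span $T_{\varphi(t)}\xg$ at smooth torus points. Composing the Gauss map with the Pl\"ucker embedding $\Gr(d,n)\hookrightarrow\mathbb{P}^{\binom{n}{d}-1}$ sends $\varphi(t)$ to the projective point whose homogeneous coordinates are the $(d\times d)$-minors of this matrix. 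A direct expansion shows that the minor indexed by $\{i_1<\cdots<i_d\}$ factors as
$$\det(\gamma_{i_1},\dots,\gamma_{i_d})\cdot t^{\gamma_{i_1}+\cdots+\gamma_{i_d}-(e_1+\cdots+e_d)}.$$
After clearing the common monomial $t^{-(e_1+\cdots+e_d)}$, the Pl\"ucker coordinates of $T_{\varphi(t)}\xg$ become $[\det(\gamma_{i_1},\dots,\gamma_{i_d})\,t^{\gamma_{i_1}+\cdots+\gamma_{i_d}}]$.

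Over a field of characteristic $p>0$, the coordinate indexed by $(i_1,\dots,i_d)$ is identically zero precisely when $\det(\gamma_{i_1},\dots,\gamma_{i_d})\equiv 0\pmod p$; the surviving coordinates are, up to nonzero scalars in $\K$, exactly the generators $t^{\gamma_{i_1}+\cdots+\gamma_{i_d}}$ of $\jp$. The hypothesis $\Z\A=\Z^d$ guarantees the existence of a subset of $\A$ forming a $\Z$-basis, so at least one index has $\det=\pm 1$, whence $\jp\neq 0$ and the induced rational map $\xg\dashrightarrow\mathbb{P}^{|\jp|-1}$ is well defined. Therefore $G\circ\varphi$ factors through $\varphi$ and the rational map defined by the generators of $\jp$. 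Taking Zariski closures in $\xg\times\Gr(d,n)$ on one side and in $\xg\times\mathbb{P}^{|\jp|-1}$ on the other, and invoking the standard description of the blowup of an ideal as the closure of the graph of its associated rational map, one obtains $\xg^*\cong\mathrm{Bl}_{\jp}\xg$.

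The main technical obstacle is the rigorous passage from the ambient $\mathbb{P}^{\binom{n}{d}-1}$ to the smaller $\mathbb{P}^{|\jp|-1}$: one must verify that the Pl\"ucker coordinates that vanish identically on the open torus also vanish on the entire graph of $G$, including its closure in $\xg\times\Gr(d,n)$, so that the two closures coincide and no spurious component is introduced. This is essentially a density argument using that the torus is open and dense in $\xg$, but in positive characteristic one has to rule out the possibility that the ``missing'' minors, although zero on the torus in the characteristic-$p$ setting, could contribute extra components at the boundary via some limit of tangent spaces at non-torus smooth points. A secondary subtlety, by contrast purely formal, is that the nonzero scalars $\det(\gamma_{i_1},\dots,\gamma_{i_d})\bmod p$ may be absorbed into the projective equivalence, identifying the surviving Pl\"ucker coordinates with the monomial generators of $\jp$.
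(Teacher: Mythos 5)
This theorem is not proved in the paper at all: it is imported verbatim from \cite{DJNB}, so your proposal can only be compared with the strategy of that reference and of the characteristic-zero literature it adapts (Gonz\'alez-Sprinberg, Gonz\'alez P\'erez--Teissier). Your outline does follow that route: parametrize the dense torus, compute the Gauss map through the Pl\"ucker embedding, obtain the minors $\det(\gamma_{i_1}\cdots\gamma_{i_d})\,t^{\gamma_{i_1}+\cdots+\gamma_{i_d}-(e_1+\cdots+e_d)}$, and match the surviving coordinates with the generators of $\jp$. However, two steps are genuinely incomplete. First, your justification that $\jp\neq 0$ is wrong: $\Z\A=\Z^d$ does \emph{not} imply that some $d$-element subset of $\A$ is a $\Z$-basis (take $\A=\{2,3\}\subset\Z$, which generates $\Z$ but contains no basis). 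What the hypothesis actually gives is that the gcd of the maximal minors of the matrix $(\gamma_1\cdots\gamma_n)$ equals $1$, hence not all of them are divisible by $p$. This is also precisely what is needed -- and silently used -- one step earlier, where you assert that the rows of the Jacobian of $\varphi$ span $T_{\varphi(t)}\xg$. In characteristic $p$ the parametrization $t\mapsto(t^{\gamma_1},\dots,t^{\gamma_n})$ need not be separable: since the Jacobian factors as a product of invertible diagonal matrices with the integer matrix $(\gamma_{j,k})$ reduced into $\K$, its rank at a torus point equals the rank of that matrix modulo $p$, and only the gcd condition guarantees this rank is $d$. This is the one place where the characteristic genuinely enters the argument and it cannot be left implicit.

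Second, the ``main technical obstacle'' you identify must be closed, but it dissolves with a standard argument that you should spell out: the graph of $G$ over $\xg\setminus\sing(\xg)$ is irreducible, being isomorphic to $\xg\setminus\sing(\xg)$ via the projection, and the graph over the torus is a dense open subset of it; therefore the two Zariski closures coincide, and any Pl\"ucker coordinate vanishing identically on the torus part of the graph vanishes on all of $\xg^*$. Since the vanishing of a set of Pl\"ucker coordinates cuts out a coordinate linear subspace of $\mathbb{P}^{\binom{n}{d}-1}$, which is closed, the closure taken there agrees with the closure taken in the smaller projective space, and no boundary components can appear. With these two repairs (together with your correct observation that the nonzero scalars $\det(\gamma_{i_1}\cdots\gamma_{i_d})\bmod p$ are absorbed by a diagonal automorphism and do not change the ideal $\jp$ or the graph up to isomorphism over $\xg$), the argument is a complete proof along the lines of the cited one.
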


Using Theorems \ref{Nash=log-0} and \ref{Nash=log}, a combinatorial description of the Nash blowup of a toric variety can be obtained with the framework developed by P. Gonz\'alez and B. Teissier for the blowup of a toric variety along any monomial ideal \cite[Section 2.6]{GT}. We state the following theorem for the particular case of the blowup of the logarithmic Jacobian ideal modulo $p$, $p\geq0$.

\begin{theorem}\label{GT}\cite[Proposition 32]{GT}
Let $\Gamma\subset\Z^d$ be a semigroup generated by $\A=\{\gamma_1,\ldots,\gamma_n\}$ and such that $\Z\A=\Z^d$. Let $p$ denote $0$ or a prime number. Consider:
\begin{itemize}
\item $\sd:=\R_{\geq0}\Ga\subset\R^d$. Assume that $\sd$ is strongly convex.
\item $\Ga_p:=\{\gamma_{i_1}+\cdots+\gamma_{i_d}|\det(\gamma_{i_1} \cdots \gamma_{i_d})\neq0\mod p\}_{1\leq i_1 <\cdots < i_d \leq n}\subset\Z^d$.
\item $\Nw:=\conv\{m+\sd|m\in\Ga_p\}\subset\R^d$, where $\conv(\cdot)$ denotes the convex hull.
\item For each vertex $m_0$ of $\Nw$, let $\A_{m_0}=\A\cup\{m-m_0|m\in\Ga_p\}$. Denote as $\Ga_{m_0}$ the semigroup generated by $A_{m_0}$.
\end{itemize}
Then the set $\{X_{\Ga_m}|m\mbox{ is a vertex of }\Nw\}$ is an affine cover of the blowup of $\xg$ along the logarithmic Jacobian ideal modulo p. 
\end{theorem}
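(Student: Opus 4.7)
The plan is to combine the standard affine cover of a blowup along a monomial ideal with a convexity argument that prunes that cover down to vertex charts.

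First, I would describe the blowup via the Rees algebra of $\jp$:
$$\K[\Ga][\jp s] \;=\; \K[\Ga]\bigl[t^m s : m \in \Ga_p\bigr] \;\subset\; \K[\Ga][s],$$
whose $\mathrm{Proj}$ is the blowup of $\xg$ along $\jp$. The standard affine cover is then $\{D_+(t^m s) : m \in \Ga_p\}$. On each chart, the degree-zero part of the localization at $t^m s$ is generated over $\K[\Ga]$ by the ratios $t^{m'}s / t^m s = t^{m'-m}$, so it is the semigroup algebra $\K[\Ga_m]$; hence $D_+(t^m s) = \spec(\K[\Ga_m]) = X_{\Ga_m}$. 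This already exhibits $\{X_{\Ga_m}\}_{m \in \Ga_p}$ as an affine cover of the blowup before any pruning.

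Second, I would show that non-vertex charts are redundant. Every vertex of $\Nw$ lies in $\Ga_p$ (a point $m+s$ with $s \in \sd \setminus \{0\}$ is the midpoint of the segment joining $m$ and $m+2s$, both in $\Nw$, so it is not extremal), and every $m \in \Ga_p$ admits a Minkowski decomposition $m = \sum_i \lambda_i v_i + s$ with $v_i$ vertices of $\Nw$, $\lambda_i \geq 0$, $\sum_i \lambda_i = 1$, and $s \in \sd$. Clearing denominators yields $N \in \N$ such that $Nm = \sum_i \mu_i v_i + Ns$ with $\mu_i \in \N$, $\sum_i \mu_i = N$, and $Ns \in \Ga$ under the saturation assumption on $\Ga$. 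This translates into the Rees-algebra identity $(t^m s)^N = t^{Ns} \prod_i (t^{v_i} s)^{\mu_i}$. Dividing by $(t^m s)^N$ in the degree-zero localization on $D_+(t^m s) = \spec(\K[\Ga_m])$ gives $t^{Ns} \prod_i (t^{v_i - m})^{\mu_i} = 1$ in $\K[\Ga_m]$, so every factor $t^{v_i - m}$ with $\mu_i > 0$ is a unit in the domain $\K[\Ga_m]$. Consequently $D_+(t^m s) \cap D_+(t^{v_i} s) = D(t^{v_i - m}) = D_+(t^m s)$, i.e., $D_+(t^m s) \subset D_+(t^{v_i} s)$, and the vertex charts alone cover the blowup.

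The main obstacle is the denominator-clearing step: one needs $Ns$ to land in $\Ga$, not merely in $\sd \cap \Z^d$. This is automatic once $\Ga$ is saturated, which is the relevant case for the 2-generic determinantal varieties of this paper, but would require extra care in the non-saturated setting. A secondary care-point is that strong convexity of $\sd$ is essential for $\Nw$ to have any vertices at all, so that the Minkowski decomposition into vertex convex hull plus recession cone $\sd$ is available; this is precisely the hypothesis imposed in the statement.
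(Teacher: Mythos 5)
The paper does not prove this statement; it is quoted from Gonz\'alez P\'erez--Teissier \cite[Proposition 32]{GT} without proof, so there is no in-paper argument to compare against. Judged on its own, your proof follows the standard route: the Rees algebra of the monomial ideal $\jp$, the identification of the degree-zero localizations at the degree-one generators $t^m s$ with the semigroup algebras $\K[\Ga_m]$, and the Minkowski decomposition $\Nw=\conv(\mathrm{vertices})+\sd$ to discard the non-vertex charts. The two convexity facts you use --- that every vertex of $\Nw$ lies in $\Ga_p$, and that the decomposition into vertices plus recession cone exists because $\sd$ is pointed --- are correctly justified, and the unit argument on $D_+(t^ms)$ is sound.

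The step you flag as conditional --- that $Ns$ must land in $\Ga$ and not merely in $\sd\cap\Z^d$ --- should not be left hanging on saturation, because the theorem does not assume $\Ga$ saturated; but it closes easily. First choose the $\lambda_i$ rational, which is possible because the feasible set $\{\lambda\ge 0,\ \sum_i\lambda_i=1,\ m-\sum_i\lambda_i v_i\in\sd\}$ is a nonempty rational polyhedron (the $m$, $v_i$ are integral and $\sd$ is a rational cone). Then $Ns=Nm-\sum_i\mu_i v_i$ lies in $\sd\cap\Q^d=\R_{\ge0}\A\cap\Q^d$, hence equals $\sum_j c_j\gamma_j$ with $c_j\in\Q_{\ge0}$, and clearing denominators once more replaces $N$ by $kN$ with $kNs\in\Ga$; the identity $(t^ms)^{kN}=t^{kNs}\prod_i(t^{v_i}s)^{k\mu_i}$ then goes through verbatim. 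Two smaller points deserve a line each: $\Ga_p\neq\emptyset$ for every $p$ (for $p>0$ this uses that the $d\times d$ minors of $(\gamma_1\cdots\gamma_n)$ have gcd $1$, which is exactly the hypothesis $\Z\A=\Z^d$), and at least one $\mu_i$ is positive since $\sum_i\mu_i=N\ge1$, so each non-vertex chart is indeed absorbed into some vertex chart. With these additions your argument is a complete and correct proof.
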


\begin{remark}\label{crit sing}
Consider the notation of the previous theorem. As a consequence of Theorems \ref{Nash=log-0}, \ref{Nash=log}, and \ref{GT}, the Nash blowup of $\xg$ is non-singular if and only if each $X_{\Gamma_m}$ is non-singular if and only if each $\Ga_m$ can be generated by $d$ of its elements.
\end{remark}

The previous theorem gives a combinatorial description of the affine charts of the Nash blowup of a toric variety. Let us apply this process to Example \ref{exam 1}.

\begin{example}\label{exam Nash 1}
Let $\A=\{\ga_1,\ldots,\ga_4\}\subset\Z^3$, where $\ga_1=e_1$, $\ga_2=e_2$, $\ga_3=e_3$, $\ga_4=-e_1+e_2+e_3$. Let $\Ga$ be the semigroup generated by $\A$. Let us write these vectors as columns of a matrix:
$$
L_{2,2}=\begin{pmatrix}
1&0&0&-1 \\
0&1&0&1	\\
0&0&1&1
\end{pmatrix}.
$$
The determinant of every maximal square submatrix of $L_{2,2}$ is $1$ or $-1$. This implies that $\Ga_p=\{(1,1,1),(0,2,1),(0,1,2),(-1,2,2)\},$ for $p=0$ or $p$ any prime number. 

Using Macaulay2 \cite{M2} to compute convex hulls and vertices, it can be shown that the semigroups $\Ga_{m_0}$ can be generated by three of its elements, for each vertex $m_0$ of $\Nw$. Hence each of these affine toric varieties is non-singular. Consequently, the Nash blowup of $\xg$ is non-singular over a field of any characteristic.
\end{example}


\section{Nash blowup of $\md$ in positive characteristic}

In this section we prove our second main theorem: the Nash blowup of $\md$ is non-singular over fields of positive characteristic. The first important ingredient toward that goal is the following theorem in characteristic zero.

\begin{theorem}\label{EGZ}\cite[Section 1]{EGZ}
Let $\K=\C$. The Nash blowup of $\mt$ is non-singular.
\end{theorem}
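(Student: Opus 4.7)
The strategy is to exhibit an explicit smooth variety $\tilde{M}$ that maps properly and birationally onto $\mt$, and then verify that this map factors through the Nash blowup $\mt^*$ as an isomorphism, yielding smoothness of $\mt^*$. The construction uses the Grassmannians that parameterize the kernels and images of matrices of rank~$t-1$.

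First I would analyze the tangent space at a smooth point. The smooth locus of $\mt$ is the stratum of matrices of rank exactly $t-1$; at such a matrix $A$ a direct Jacobian computation of the $t\times t$ minors yields
$$T_A\mt = \{\, B \in \mat : B(\ker A) \subseteq \im(A)\,\}.$$
Thus on the smooth locus the Gauss map factors as $A \mapsto (\im(A),\ker(A)) \mapsto T_{V,W}$, where for a pair $(V,W) \in \Gr(t-1,m) \times \Gr(n-t+1,n)$ I set
$$T_{V,W} := \{B \in \mat : B(W) \subseteq V\}.$$
A dimension count gives $\dim T_{V,W} = (t-1)(m+n-t+1) = \dim \mt$, so $T_{V,W}$ always defines a point of $\Gr(d,mn)$ with $d = \dim\mt$.

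Second, I would consider the incidence variety
$$\tilde{M} := \{(A,V,W) \in \mt \times \Gr(t-1,m) \times \Gr(n-t+1,n) : \im(A) \subseteq V,\ W \subseteq \ker(A)\}.$$
Projection to the pair of Grassmannians realizes $\tilde{M}$ as the total space of the vector bundle whose fiber over $(V,W)$ is $\mathrm{Hom}(\K^n/W, V)$, so $\tilde{M}$ is smooth and irreducible of dimension
$$(t-1)(m-t+1) + (t-1)(n-t+1) + (t-1)^2 = (t-1)(m+n-t+1) = \dim \mt.$$
The projection $\pi:\tilde{M}\to\mt$ is proper, and it is an isomorphism over the smooth locus, since there $V$ and $W$ are uniquely recovered as $\im(A)$ and $\ker(A)$. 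Hence $\pi$ is a resolution of singularities.

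Third, I would build the morphism $\Phi:\tilde{M}\to\mt \times \Gr(d,mn)$, $(A,V,W)\mapsto (A,T_{V,W})$. On the smooth locus $\Phi$ coincides with the graph of the Gauss map, so by properness of $\pi$ and irreducibility of $\tilde{M}$ the image lies in $\mt^*$, and the induced map $\Phi:\tilde{M}\to\mt^*$ is proper and birational. The main obstacle I anticipate is proving injectivity of $\Phi$, which amounts to recovering the pair $(V,W)$ from the plane $T_{V,W}$. Concretely, one recovers $V = \sum_{B \in T_{V,W}} B(W)$ (since $T_{V,W}$ contains every extension-by-zero of a linear map $W\to V$), and $W = \bigcap \{\ker B : B \in T_{V,W},\ B(W)=0\}$ (since the linear forms on $\K^n$ killing $W$ are exactly the rows of elements of $T_{V,W}\cap \mathrm{Hom}(\K^n/W,\K^m)$). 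With injectivity established, $\Phi$ is a bijective proper birational morphism from a smooth variety onto $\mt^*$; over $\K=\C$ Zariski's Main Theorem then forces $\Phi$ to be an isomorphism, so $\mt^* \cong \tilde{M}$ is smooth, proving the theorem.
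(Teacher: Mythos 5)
The paper does not prove this statement at all: Theorem~\ref{EGZ} is quoted from \cite[Section 1]{EGZ} and used as a black box, so there is no in-paper argument to compare against. Your proposal is an attempt at the cited result itself, and its skeleton is the standard one (essentially Tjurina's resolution of determinantal varieties, which is also what Ebeling and Gusein-Zade use): the tangent-space identification $T_A\mt=\{B: B(\ker A)\subseteq\im A\}$ at a rank-$(t-1)$ matrix is correct, the dimension counts check out, $\tilde M$ is indeed a smooth resolution, and the factorization of the Gauss map through $(V,W)\mapsto T_{V,W}$ is the right idea.

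There are, however, two problems in the last third. First, your recovery formulas are circular as written: both $V=\sum_{B\in T_{V,W}}B(W)$ and $W=\bigcap\{\ker B: B\in T_{V,W},\ B(W)=0\}$ presuppose that $W$ is already known, so they do not by themselves prove that $T_{V,W}=T_{V',W'}$ forces $(V,W)=(V',W')$. This is fixable (if $W\neq W'$, pick $w\in W\setminus W'$ and a map killing $W'$ that sends $w$ outside $V$; this lies in $T_{V',W'}$ but not in $T_{V,W}$, so $W=W'$, and then compare $\sum_B B(W)$), but the argument needs to be restructured. Second, and more seriously, the final appeal to Zariski's Main Theorem is a genuine gap: a bijective proper birational morphism onto a variety is an isomorphism only if the \emph{target} is normal (the normalization of a cuspidal cubic is a bijective proper birational morphism that is not an isomorphism), and normality of $\mt^*$ is precisely what you do not know a priori --- it is essentially what you are trying to prove. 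To close the argument you must show that $\Phi$ is a closed embedding, e.g.\ by proving that $(V,W)\mapsto T_{V,W}=\mathrm{Hom}(\K^n,V)+\mathrm{Hom}(\K^n/W,\K^m)$ is a closed embedding of $\Gr(t-1,m)\times\Gr(n-t+1,n)$ into $\Gr(d,mn)$ (injective on points \emph{and} on tangent vectors, or by exhibiting an algebraic inverse on the image); then $\Phi(\tilde M)$ is a smooth closed subvariety containing the graph of the Gauss map densely, whence $\mt^*=\Phi(\tilde M)\cong\tilde M$.
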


To give the positive characteristic version of this theorem in the case $t=2$, we study the maximal minors of the matrix whose columns are the vectors of $\A$ from Theorem \ref{M2 toric}. To describe the matrix we introduce some notation. 

Firstly, let $e_{1},\ldots,e_{m+n-1}$ denote the canonical basis of $\Z^{m+n-1}$. Given $j\in\{1,\ldots,n-1\}$, denote
$$B_{m+j}=\{-e_1+e_2+e_{m+j},-e_1+e_3+e_{m+j},\ldots,-e_1+e_m+e_{m+j}\}.$$
Let $m,n\geq2$. Abusing notation, we define the following matrix:
$$\lmn=(e_1\mb e_2\mb \cdots \mb e_{m+n-1}\mb B_{m+1}\mb B_{m+2}\mb\cdots\mb B_{m+n-1}).$$
Notice that $\lmn$ is a $(m+n-1\times mn)$-matrix.

\begin{example}
$$
L_{2,2}=\begin{pmatrix}
1&0&0&-1 \\
0&1&0&1	\\
0&0&1&1
\end{pmatrix}.
$$
$$
L_{2,3}=\begin{pmatrix}
1&0&0&0&-1&-1 \\
0&1&0&0&1&1\\
0&0&1&0&1&0\\
0&0&0&1&0&1
\end{pmatrix}.
$$
$$
L_{3,3}=\begin{pmatrix}
1&0&0&0&0&-1&-1&-1&-1 \\
0&1&0&0&0&1&0&1&0\\
0&0&1&0&0&0&1&0&1\\
0&0&0&1&0&1&1&0&0\\
0&0&0&0&1&0&0&1&1
\end{pmatrix}.
$$
\end{example}

\begin{theorem}\label{Nash M2 smooth}
Assume that $\ch(\K)=p>0$. The Nash blowup of $\md$ is non-singular.
\end{theorem}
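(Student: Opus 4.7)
The plan is to reduce the theorem to the characteristic-zero statement of Ebeling and Gusein-Zade (Theorem \ref{EGZ}) via the toric description of $\md$ given in Theorem \ref{M2 toric}. Combining Theorems \ref{Nash=log} and \ref{GT} with Remark \ref{crit sing}, it is enough to prove that $\Ga_p = \Ga_0$ as subsets of $\Z^{m+n-1}$: if this equality holds, then $\Nw$, its vertices, and each attached semigroup $\Ga_{m_0}$ are characteristic-free, so the generator-count criterion of Remark \ref{crit sing} is inherited from the characteristic-zero non-singularity.

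By Definition \ref{log jac mod p}, the equality $\Ga_p = \Ga_0$ follows as soon as every maximal minor of the matrix $\lmn$ takes a value in $\{-1, 0, 1\}$: a determinant in that set is nonzero in $\Z$ if and only if it is nonzero modulo every prime $p$. We will in fact prove the stronger claim that $\lmn$ is totally unimodular, which makes the invariance under reduction mod $p$ completely transparent.

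For the total unimodularity we would invoke the Ghouila-Houri criterion: it suffices, for each subset $R \subseteq \{1, \ldots, m+n-1\}$ of rows, to exhibit a partition $R = R_1 \sqcup R_2$ such that for every column $c$ of $\lmn$,
$$\Bigl| \sum_{i \in R_1} c_i - \sum_{i \in R_2} c_i \Bigr| \leq 1.$$
Our choice is explicit: if $1 \in R$ take $R_1 = R$ and $R_2 = \emptyset$; if $1 \notin R$ take $R_1 = R \cap \{2, \ldots, m\}$ and $R_2 = R \cap \{m+1, \ldots, m+n-1\}$. The bound is immediate on the unit-vector columns $e_k$. On a column $v_{ij} := -e_1 + e_i + e_{m+j}$ the nonzero entries sit in rows $1, i, m+j$ with values $-1, +1, +1$. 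In the first case the signed sum equals $-1 + a + b$ with $a, b \in \{0, 1\}$, hence lies in $\{-1, 0, 1\}$; in the second case row $1$ contributes nothing and the signed sum collapses to the contribution from row $i$ (namely $+1$ if $i \in R_1$ and $0$ otherwise) minus the contribution from row $m+j$ (namely $+1$ if $m+j \in R_2$ and $0$ otherwise), which again lies in $\{-1, 0, 1\}$.

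The main technical step is this case analysis; the remainder is a formal invocation of the cited theorems. An alternative route, should one prefer it, is to realize $\lmn$ as the network matrix of the digraph on vertex set $\{0, 1, \ldots, m+n-1\}$ whose spanning tree has edges $0 \to 1$, $i \to 1$ for $i \in \{2, \ldots, m\}$ and $0 \to m+j$ for $j \in \{1, \ldots, n-1\}$, with non-tree edges given by a parallel to each tree edge together with the arcs $i \to m+j$; the fundamental cycle of $i \to m+j$ traverses the edges from $1$ to $i$ and from $0$ to $m+j$ forward and the edge $0 \to 1$ backward, recovering the column $v_{ij}$, after which total unimodularity is the classical statement about network matrices.
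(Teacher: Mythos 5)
Your proposal is correct, and the overall reduction is the same as the paper's: both arguments show that every maximal minor of $\lmn$ lies in $\{-1,0,1\}$, deduce that $\Ga_p=\Ga_0$ and hence that the polyhedron $\mathcal N(\Ga_p)$, its vertices, and the chart semigroups $\Ga_{m_0}$ are independent of $p$, and then import non-singularity from Theorem \ref{EGZ} via Remark \ref{crit sing}. Where you genuinely diverge is in the proof of the key combinatorial lemma. The paper's Proposition \ref{minors} is an induction on $n$ (with a separate lemma for the base case $n=2$) carried out by explicit column operations and a four-way case analysis on how the columns of a maximal square submatrix are distributed among $\tlmu$, $\{e_{m+n-1}\}$, and $B_{m+n-1}$; it establishes only the statement about maximal minors, which is all that is needed. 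You instead prove the stronger assertion that $\lmn$ is totally unimodular by exhibiting, for each row subset $R$, an equitable bipartition satisfying the Ghouila-Houri criterion; your verification on the columns $e_k$ and $-e_1+e_i+e_{m+j}$ is complete and correct in both cases ($1\in R$ and $1\notin R$). This buys a proof that is a few lines instead of several pages, yields that \emph{all} minors (not just maximal ones) are $0$ or $\pm1$, and makes the characteristic-freeness transparent; the cost is an appeal to a nontrivial external theorem from combinatorial matrix theory, whereas the paper's argument is self-contained. One small quibble: in your alternative network-matrix realization, with the tree edges oriented $i\to 1$ as you specify, the fundamental cycle of the arc $i\to m+j$ produces the column $e_1-e_i-e_{m+j}=-v_{ij}$ rather than $v_{ij}$; this is harmless since negating columns preserves total unimodularity, but the orientations should be stated consistently. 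You should also record (as the paper does in a corollary to Theorem \ref{M2 toric}) that $\R_{\geq0}\Ga$ is strongly convex, which is a standing hypothesis of Theorem \ref{GT}.
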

\begin{proof}
Let $L_{m,n}$ be the matrix previously defined. In Proposition \ref{minors} we show that all $(m+n-1\times m+n-1)$-minors of $L_{m,n}$ are $0$, $1$ or $-1$. Thus, in the notation of Theorem \ref{GT}, $\Ga_0=\Ga_p$ for every prime number $p$. As a consequence, $\mathcal{N}(\Ga_0)=\mathcal{N}(\Ga_p)$ for every prime $p$, their vertices coincide, and hence the semigroups $\Ga_m$ are also independent of the characteristic. By Theorem \ref{EGZ} and Remark \ref{crit sing}, we conclude that the Nash blowup of $\md$ is non-singular also over fields of positive characteristic.
\end{proof}

Now we prove the claim made in the previous proof regarding the maximal minors of $\lmn$. 

\begin{lemma}
All $(m+1)$-minors of $L_{m,2}$ are $0$, $1$, or $-1$.
\end{lemma}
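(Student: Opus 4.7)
The plan is to write $L_{m,2} = (I_{m+1} \mid M)$, where $M$ is the $(m+1) \times (m-1)$ matrix whose $j$-th column equals $-e_1 + e_{j+1} + e_{m+1}$, and to parametrize any choice of $m+1$ columns by a pair $(S, T)$ with $S \subseteq \{1, \ldots, m+1\}$ indexing the selected identity columns and $T \subseteq \{1, \ldots, m-1\}$ indexing the selected columns of $M$, subject to $|S| + |T| = m+1$. Because each identity column $e_i$ has a single $1$ in row $i$, iterated cofactor expansion along the $S$-columns gives
$$\det = \pm \det M_{S^c, T},$$
where $S^c := \{1, \ldots, m+1\} \setminus S$ has size $|T|$. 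So it suffices to show that every square submatrix of $M$ indexed by such a pair $(S^c, T)$ has determinant in $\{0, \pm 1\}$.

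The key structural observation is that the rows of $M$ are rigidly constrained: row $1$ is $(-1, -1, \ldots, -1)$, row $m+1$ is $(1, 1, \ldots, 1)$, and each middle row $i \in \{2, \ldots, m\}$ has at most one nonzero entry, namely a $+1$ in column $i - 1$. I would then split into four cases according to whether $1$ and/or $m+1$ belong to $S^c$. If both do, $M_{S^c, T}$ contains two proportional rows and the determinant vanishes. If neither does, every row of $M_{S^c, T}$ is a middle row with at most one $+1$; a nonzero determinant forces these $+1$'s to sit in distinct columns that exhaust $T$, so $M_{S^c, T}$ is a permutation matrix of determinant $\pm 1$. In the two symmetric cases where exactly one of $\{1, m+1\}$ lies in $S^c$, exactly one row is dense (all $\pm 1$'s) while the remaining $|T| - 1$ rows each carry a single $+1$; a nonzero determinant forces those $+1$'s into $|T| - 1$ distinct columns of $T$, leaving one ``leftover'' column. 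After reordering rows and columns the submatrix takes the form of a $(|T|-1) \times (|T|-1)$ identity block adjoined with a zero column above and a dense row of $\pm 1$'s at the bottom, and expanding along the leftover column then yields $\det = \pm 1$.

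I expect the main obstacle to be nothing more than the combinatorial bookkeeping in the mixed case, where one must verify that, after forcing each middle row to match its unique $+1$ to a column of $T$, exactly one column of $T$ remains free and the dense row contributes a $\pm 1$ entry there. No additional work is needed to distinguish the two mixed sub-cases (dense row at position $1$ versus position $m+1$) beyond flipping a sign.
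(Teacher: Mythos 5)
Your proof is correct, and it takes a genuinely different route from the paper's. The paper works directly on the full $(m+1)\times(m+1)$ submatrix $L$ and simplifies it by elementary column operations: it splits into cases according to whether $e_{m+1}$ and/or vectors of $B_{m+1}$ are among the chosen columns, subtracts columns to turn the $B_{m+1}$-vectors into $-e_1+e_{i_k}$ and then (when $e_1$ or some $e_{i_k}$ is also present) into canonical basis vectors, and otherwise concludes $\det L=0$ by a cardinality/repetition count. You instead exploit the block structure $L_{m,2}=(I_{m+1}\mid M)$ from the start: Laplace expansion along the selected identity columns reduces the whole problem to a square submatrix $M_{S^c,T}$ of $M$, whose rows are rigidly constrained (two dense rows $\mp(1,\dots,1)$ and middle rows with at most a single $+1$), and the four-way case split on $S^c\cap\{1,m+1\}$ disposes of every case in a line or two. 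Your argument is shorter and more transparent for this lemma, and it essentially exhibits the unimodularity of the $M$-block directly; the trade-off is that the paper's column-operation formalism is the one that is reused verbatim in the inductive step of Proposition \ref{minors} (reducing $L_{m,n}$ to $L_{m,n-1}$), so the paper's base case is written to match that template, whereas your block decomposition is tailored to the $n=2$ shape. One small point worth making explicit if you write this up: the reduction $\det L=\pm\det M_{S^c,T}$ uses that the selected identity columns are pairwise distinct standard basis vectors, which holds here because the first $m+1$ columns of $L_{m,2}$ are exactly $e_1,\dots,e_{m+1}$; and in the case $1,m+1\in S^c$ one should note $|T|=|S^c|\geq 2$ so that the two proportional rows really force a vanishing determinant.
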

\begin{proof}
Firstly, notice that 
\begin{align}
\lmd&=(e_1\mb\cdots e_{m+1}\mb B_{m+1})\notag\\
&=(e_1\mb\cdots e_{m+1}\mb-e_1+e_2+e_{m+1}\mb\cdots-e_1+e_m+e_{m+1}).\notag
\end{align} 
Let $L=(v_1\,v_2\,\cdots\,v_{m+1})$ be a maximal square submatrix of $\lmd$. Consider the following cases, depending on the choice of $v_i's$ from $\{e_1,\ldots,e_m\}$, $\{e_{m+1}\}$ or $B_{m+1}$. We consider three main cases.
\begin{itemize}
\item[(1)] $\{v_1,\ldots,v_{m+1}\}=\{e_1,\ldots,e_{m+1}\}$. 
\item[(2)] One $v_i$ is $e_{m+1}$ and at least one $v_j$ belongs to $B_{m+1}$.
\item[(3)] All $v_i's$ belong to $\{e_1,\ldots,e_m\}\cup B_{m+1}$.
\end{itemize}

We prove that in each case the determinant of $L$ is $0$, $1$ or $-1$.
\begin{itemize}
\item[(1)] In this case all $v_i's$ are canonical basis elements. Then $\det L$ is $1$ or $-1$.
\item[(2)] After renumbering the vectors if necessary, assume 
\begin{align}
&v_1=e_{m+1},\notag\\
&v_2,\ldots,v_l\in B_{m+1} \mbox{ for some } 2\leq l \leq m, \notag\\ 
&v_{l+1},\ldots,v_{m+1}\in\{e_1,\ldots,e_m\}.\notag
\end{align} 
Let $2\leq i_2<\cdots< i_l\leq m$ be such that $v_k=-e_1+e_{i_k}+e_{m+1}$, for each $k\in\{2,\ldots,l\}$. Since $v_1=e_{m+1}$, we can replace the column $v_k$ by $v_k-v_1=-e_1+e_{i_k}$ for each $2\leq k\leq l$, without affecting the determinant of $L$ (up to sign). Hence, we want to compute the determinant of
$$L'=(e_{m+1}\mb -e_1+e_{i_2}\mb\cdots\mb-e_1+e_{i_l}\mb v_{l+1}\mb\cdots\mb v_{m+1}).$$
\end{itemize}
\begin{itemize}
\item[(2.1)] Suppose $v_j=e_1$ for some $l+1\leq j \leq m+1$. Replacing the columns $-e_1+e_{i_k}$ by $-e_1+e_{i_k}+v_j=e_{i_k}$, the matrix $L'$ becomes
$$L''=(e_{m+1}\mb e_{i_2}\mb\cdots\mb e_{i_l}\mb v_{l+1}\mb\cdots\mb v_{m+1}).$$
Notice that each column of $L''$ is a canonical basis element. Hence $\det L$ is $0$, $1$, or $-1$.
\item[(2.2)] Suppose $v_j\in\{e_{i_2},\ldots,e_{i_l}\}$ for some $l+1\leq j \leq m+1$. By simplicity of notation assume that $v_{l+1}=e_{i_2}$. In $L'$, replace the colum $-e_1+e_{i_2}$ by $-e_1+e_{i_2}-v_{l+1}=-e_1$. We obtain the following matrix
$$L''=(e_{m+1}\mb -e_1\mb -e_1+e_{i_3}\mb\cdots\mb-e_1+e_{i_l}\mb v_{l+1}\mb\cdots\mb v_{m+1}).$$
Having $-e_1$ as a column, now we can proceed as in $(2.1)$ to turn the columns $-e_1+e_{i_k}$ into $e_{i_k}$. As before, $\det L$ is $0$, $1$, or $-1$.
\item[(2.3)] Suppose $\{v_{l+1},\ldots,v_{m+1}\}\subset \{e_1,\ldots,e_m\}\setminus\{e_1,e_{i_2},\ldots,e_{i_l}\}$. By the cardinality of these sets, we conclude that some $v_i's$ are repeated. Hence $\det L=0$.
\end{itemize}
\begin{itemize}
\item[(3)] After renumbering the vectors if necessary, assume 
\begin{align}
&v_2,\ldots,v_l\in B_{m+1} \mbox{ for some } 2\leq l \leq m, \notag\\ 
&v_1,v_{l+1},\ldots,v_{m+1}\in\{e_1,\ldots,e_m\}.\notag
\end{align} 
Let $2\leq i_2<\cdots< i_l\leq m$ be such that $v_k=-e_1+e_{i_k}+e_{m+1}$, for each $k\in\{2,\ldots,l\}$.
\end{itemize}
\begin{itemize}
\item[(3.1)] Suppose $v_j\in\{e_{i_2},\ldots,e_{i_l}\}$ for some $l+1\leq j \leq m+1$ or $j=1$. By simplicity of notation assume that $v_1=e_{i_2}$. In $L$, replace the colum $v_2$ by $v_2-v_1=-e_1+e_{m+1}$. We obtain the following matrix
$$L'=(e_{i_2}\mb -e_1+e_{m+1}\mb v_3\mb\cdots v_l\mb v_{l+1}\mb\cdots\mb v_{m+1}).$$
Now replace each $v_{k}$ by $v_k-(-e_1+e_{m+1})=e_{i_k}$, for each $k\in\{3,\ldots,l\}$. Now permute the columns $e_{i_2}$ and $-e_1+e_{m+1}$. We obtain the following matrix
$$L''=(-e_1+e_{m+1}\mb e_{i_2}\mb e_{i_3}\mb\cdots\mb e_{i_l}\mb v_{l+1}\mb\cdots\mb v_{m+1}).$$
This matrix has the following shape
$$
\begin{pmatrix}
*	&  A \\
1 	& 0	
\end{pmatrix},
$$
where $A$ is a $(m\times m)$-matrix whose columns are canonical basis elements of $\Z^{m}$. Hence $\det L$ is $0$, $1$ or $-1$.
\end{itemize}
\begin{itemize}
\item[(3.2)] Suppose $\{v_1,v_{l+1},\ldots,v_{m+1}\}\subset \{e_1,\ldots,e_m\}\setminus\{e_{i_2},\ldots,e_{i_l}\}$. By the cardinality of these sets, we conclude that some $v_i's$ are repeated. Hence $\det L=0$.
\end{itemize}
\end{proof}


\begin{proposition}\label{minors}
Let $m,n\geq2$. All maximal minors of $\lmn$ are $0$, $1$, or $-1$.
\end{proposition}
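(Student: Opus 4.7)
The plan is to prove the proposition by a single elementary row operation that converts $\lmn$ into a bipartite incidence matrix, thereby avoiding any inductive case analysis on $n$. Concretely, I would define a new matrix $M$ obtained from $\lmn$ by replacing the first row $r_1$ with the sum $r_1 + r_{m+1} + r_{m+2} + \cdots + r_{m+n-1}$. Because this is a composition of elementary row operations applied to all $m+n-1$ rows, the determinant of every $(m+n-1)\times(m+n-1)$ submatrix of $\lmn$ coincides with the determinant of the corresponding submatrix of $M$; in particular the maximal minors of the two matrices are equal.

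Next I would determine the entries of $M$ by direct inspection. The contribution from $r_{m+j}$ cancels the $-1$ in the first row at each column of $B_{m+j}$ (turning it into a $0$) and promotes the $0$ in the first row at each column $e_{m+j}$ into a $+1$; no other entries change. Consequently $M$ is a $\{0,1\}$-matrix: each column $e_i$ with $1\le i\le m$ still has a single $1$ at row $i$; each column $e_{m+j}$ now has $1$'s at rows $1$ and $m+j$; and each column $-e_1+e_i+e_{m+j}$ now has $1$'s at rows $i$ and $m+j$. In particular every column of $M$ has at most two nonzero entries.

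The columns of $M$ with exactly two nonzero entries realize precisely the edges of a bipartite graph with parts $\{1,\ldots,m\}$ and $\{m+1,\ldots,m+n-1\}$, while the single-entry columns $e_1,\ldots,e_m$ may be viewed as pendant edges attached to an auxiliary dummy vertex $0$. Augmenting $M$ by a row indexed by this dummy vertex produces the full node-edge incidence matrix of the complete bipartite graph $K_{m,n}$ with bipartition $(\{1,\ldots,m\},\{0,m+1,\ldots,m+n-1\})$. Node-edge incidence matrices of bipartite graphs are classically totally unimodular, and total unimodularity is inherited by row-submatrices; hence $M$ itself is totally unimodular, so every maximal minor of $M$ lies in $\{-1,0,1\}$. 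Combined with the first step, the same conclusion holds for $\lmn$.

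The only genuine verification is the entrywise computation in the second paragraph; everything else is elementary linear algebra plus the standard total unimodularity of bipartite incidence matrices. The main obstacle is simply identifying the correct row operation — once one decides to replace $r_1$ by $r_1 + r_{m+1} + \cdots + r_{m+n-1}$, the bipartite incidence structure, and with it the bound on maximal minors, emerges transparently; this approach also subsumes the preceding lemma (the case $n=2$) without any extra work.
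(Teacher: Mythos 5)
Your proof is correct, but it takes a genuinely different route from the paper's. You verify the entrywise effect of the row operation $r_1\mapsto r_1+r_{m+1}+\cdots+r_{m+n-1}$ correctly: the columns $e_1,\ldots,e_m$ are unchanged, each $e_{m+j}$ acquires a second $1$ in row $1$, and each column $-e_1+e_i+e_{m+j}$ loses its $-1$; the resulting $\{0,1\}$-matrix, augmented by the dummy row, is exactly the vertex--edge incidence matrix of $K_{m,n}$ with bipartition $(\{1,\ldots,m\},\{0,m+1,\ldots,m+n-1\})$ (the edge count $m+(n-1)m=mn$ and the pairwise distinctness of the edges confirm this). Since a sequence of row additions preserves every full-row minor, and total unimodularity of bipartite incidence matrices passes to row-submatrices, the conclusion follows. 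The paper instead proves the $n=2$ case by a separate lemma with a three-way case analysis and then runs an induction on $n$ with four main cases and several subcases, using column operations either to reduce to a block matrix whose upper block is a submatrix of $L_{m,n-1}$ or to exhibit a linear dependence forcing the determinant to vanish. Your argument is shorter, subsumes the lemma, and explains conceptually why unimodularity holds (the semigroup of Theorem~\ref{M2 toric} is, after this change of coordinates, the edge semigroup of $K_{m,n}$); its only external input is the classical total unimodularity of bipartite incidence matrices (available, e.g., in the matroid-theory literature already cited in the bibliography), whereas the paper's induction is entirely self-contained. One small point worth making explicit if you write this up: the row operation only preserves \emph{maximal} (full-row) minors of $\lmn$, so what you obtain for $\lmn$ itself is precisely the statement of the proposition, not total unimodularity of $\lmn$ --- but that is all that is needed.
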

\begin{proof}
We proceed by induction on $n\geq2$, the previous lemma being the case $n=2$. Recall that
$$\lmn=(e_1\mb e_2\mb \cdots \mb e_{m+n-1}\mb B_{m+1}\mb B_{m+2}\mb\cdots\mb B_{m+n-1}).$$
By definition, all columns of $\lmn$ correspond to vectors in $\Z^{m+n-1}$.

The following equality is key to our arguments. Suppose $n\geq3$. Rearranging columns we rewrite $\lmn$ as follows (and keep the same notation),
$$\lmn=(e_1\mb e_2\cdots\mb e_{m+n-2}\mb B_{m+1}\mb\cdots\mb B_{m+n-2}\mb e_{m+n-1}\mb  B_{m+n-1}).$$
Hence, we have the equality,
$$\lmn=
\begin{pmatrix}
\lmu	& 		&  \\
---	& e_{m+n-1}		& B_{m+n-1} \\
 0	& 				&
\end{pmatrix}.
$$
Indeed, notice that the columns of $\lmu$ correspond to elements of $\Z^{m+n-2}$, which explains the 0 below the dashed line. Hence the equality makes sense. We write $\lmn=(\tlmu\mb e_{m+n-1}\mb B_{m+n-1})$, where
$$\tlmu=
\begin{pmatrix}
\lmu	 \\
---	 \\
 0	 
\end{pmatrix}.
$$

Let $L=(v_1\mb\cdots\mb v_{m+n-1})$ be a maximal submatrix of $\lmn$. We show that $\det L$ is $0$, $1$, or $-1$. We consider four main cases:
\begin{itemize}
\item[(I)] All vectors $v_i$ are taken from $\tlmu$.
\item[(II)] All vectors $v_i$ are taken from $\tlmu$ except one, which is taken from $\{e_{m+n-1}\}\cup B_{m+n-1}$.
\item[(III)] One $v_i$ is $e_{m+n-1}$ and there is at least one $v_j$ taken from $B_{m+n-1}$.
\item[(IV)] All vectors $v_i$ are taken from $\tlmu$ and $B_{m+n-1}$.
\end{itemize}

\noindent In case (I) $\det L=0$ since the last row of $L$ contains only zero entries.
\\

\noindent Case (II). Assume for simplicity of notation that $v_1\in\{e_{m+n-1}\}\cup B_{m+n-1}$ and $v_2,\ldots,v_{m+n-1}$ are taken from $\tlmu$. In this case $L$ has the shape
$$
\begin{pmatrix}
*	&  A \\
1 	& 0	
\end{pmatrix},
$$
where $A$ is a square matrix whose columns are taken from $\lmu$. By induction $\det L$ is $0$, $1$ or $-1$.
\\

\noindent Case (III). Assume that $v_1=e_{m+n-1}$, $v_2,\ldots,v_l\in B_{m+n-1}$ for $2\leq l \leq m$, and $v_{l+1},\ldots,v_{m+n-1}$ are taken from $\tlmu$. Let $2\leq i_2<\cdots<i_l\leq m$ be such that 
\begin{align}
v_2&=-e_1+e_{i_2}+e_{m+n-1},\notag\\
&\vdots\notag\\
v_l&=-e_1+e_{i_l}+e_{m+n-1}.\notag
\end{align}
For $k\in\{2,\ldots,l\}$, replace $v_k$ by $v_k-v_1=-e_1+e_{i_k}$. These operations turn $L$ into the matrix
$$L'=(e_{m+n-1}\mb -e_1+e_{i_2}\mb\cdots\mb-e_1+e_{i_l}\mb v_{l+1}\mb\cdots\mb v_{m+n-1}).$$
\begin{enumerate}
\item Suppose $v_j=e_1$ for some $j\in\{l+1,\ldots,m+n-1\}$. Then colum operations using $v_j$ turn $L'$ into
$$L''=(e_{m+n-1}\mb e_{i_2}\mb\cdots\mb e_{i_l}\mb v_{l+1}\mb\cdots\mb v_{m+n-1}).$$
Notice that $L''$ has the shape
$$
\begin{pmatrix}
0	&  A \\
1 	& 0	
\end{pmatrix},
$$
where $A$ is a square matrix whose columns are taken from $\lmu$. By induction $\det L$ is $0$, $1$ or $-1$.
\item Suppose $v_j=e_{i_k}$ for some $j\in\{l+1,\ldots,m+n-1\}$ and $k\in\{2,\ldots,l\}$. For simplicity of notation assume $v_{l+1}=e_{i_2}$. In $L'$ replace $-e_1+e_{i_2}$ by $-(-e_1+e_{i_2}-v_{l+1})=e_1$. We obtain the matrix
$$L''=(e_{m+n-1}\mb e_1\mb-e_1+e_{i_3}\mb\cdots\mb-e_1+e_{i_l}\mb v_{l+1}\mb\cdots\mb v_{m+n-1}).$$
Then colum operations using $e_1$ turn $L''$ into
$$L'''=(e_{m+n-1}\mb e_1\mb e_{i_3}\cdots\mb e_{i_l}\mb v_{l+1}\cdots\mb v_{m+n-1}).$$
We obtain the desired determinant by induction as in 1.
\end{enumerate}
In view of 1. and 2. we can assume that $v_{l+1},\ldots,v_{m+n-1}$ belong to 
$$(\{e_1,\ldots,e_m\}\setminus\{e_1,e_{i_2},\ldots,e_{i_l}\})\cup\{e_{m+1},\ldots,e_{m+n-2}\}\cup B_{m+1}\cup\cdots\cup B_{m+n-2}.$$
For $j\in\{1,\ldots,n-2\}$ denote
\begin{align}
B^+_{m+j}&=\{-e_1+e_{i_2}+e_{m+j},\ldots,-e_1+e_{i_l}+e_{m+j}\},\notag\\
B^-_{m+j}&=B_{m+j}\setminus B^+_{m+j}.\notag
\end{align}
\begin{itemize}
\item[3.] Suppose $v_i\in\{e_{m+1},\ldots,e_{m+n-2}\}$ for some $i\in\{l+1,\ldots,m+n-1\}$. In $L'$ replace $-e_1+e_{i_k}$ by $(-e_1+e_{i_k})+v_i$. These new vectors belong to $B_{m+j}$ for some $j\in\{1,\ldots,n-2\}$. Hence, these column operations turn $L'$ into a matrix of the form
$$
\begin{pmatrix}
0	&  A \\
1 	& 0	
\end{pmatrix},
$$
where $A$ is a square matrix whose columns are taken from $\lmu$. By induction $\det L$ is $0$, $1$ or $-1$.
\item[4.] Suppose $v_i\in B^+_{m+1}\cup\cdots\cup B^+_{m+n-2}$ for some $i\in\{l+1,\ldots,m+n-1\}$. For simplicity of notation assume $v_{l+1}=-e_1+e_{i_2}+e_{m+j}$, for some $j\in\{1,\ldots,n-2\}$. In $L'$ replace $v_{l+1}$ by $v_{l+1}-(-e_1+e_{i_2})=e_{m+j}$. This brings us back to case 3.
\end{itemize}
In view of 3. and 4. we can assume that $v_{l+1},\ldots,v_{m+n-1}$ belong to 
$$W_0=(\{e_1,\ldots,e_m\}\setminus\{e_1,e_{i_2},\ldots,e_{i_l}\})\cup B^-_{m+1}\cup\cdots\cup B^-_{m+n-2}.$$
Since $2\leq l \leq m$ and $n\geq3$, we have that $\{v_{l+1},\ldots,v_{m+n-1}\}$ is non-empty. Hence $W_0\neq\emptyset$ and so $l<m$. Let $W\subset\R^{m+n-1}$ be the vector space generated by $W_0$. Notice that a basis for $W$ is
$$(\{e_1,e_2,\ldots,e_m\}\setminus\{e_1,e_{i_2},\ldots,e_{i_l}\})\cup \{-e_1+e_{m+1}\}\cup\cdots\cup \{-e_1+e_{m+n-2}\}.$$
Hence $\dim_{\R}W=(m-l)+(n-2)$. Since $|\{v_{l+1},\ldots,v_{m+n-1}\}|=m+n-1-l=(m-1)+(n-1)$, we conclude that $\{v_{l+1},\ldots,v_{m+n-1}\}$ is linearly dependent. Hence $\det L=0$. This concludes the proof of Case (III).
\\

\noindent Case (IV). Assume that $v_2,\ldots,v_l\in B_{m+n-1}$ for $2\leq l \leq m$, and $v_1,v_{l+1},$ $\ldots,v_{m+n-1}$ are taken from $\tlmu$. Let $2\leq i_2<\cdots<i_l\leq m$ be such that 
\begin{align}
v_2&=-e_1+e_{i_2}+e_{m+n-1},\notag\\
&\vdots\notag\\
v_l&=-e_1+e_{i_l}+e_{m+n-1}.\notag
\end{align}
\begin{enumerate}
\item Suppose $v_j=e_{i_k}$ for some $j\in\{1,l+1,\ldots,m+n-1\}$ and $k\in\{2,\ldots,l\}$. For simplicity of notation assume $v_1=e_{i_2}$. In $L$ replace $v_2$ by $v_2-v_1=-e_1+e_{m+n-1}$.  We obtain the matrix
$$L'=(v_1\mb -e_1+e_{m+n-1}\mb v_3\mb\cdots\mb v_l \mb v_{l+1}\mb\cdots\mb v_{m+n-1}).$$
For each $k\in\{3,\ldots,l\}$, replace $v_k$ by $v_k-(-e_1+e_{m+n-1})=e_{i_k}$. These colum operations turn $L'$ into
$$L''=(v_1\mb -e_1+e_{m+n-1}\mb e_{i_3}\cdots\mb e_{i_l}\mb v_{l+1}\cdots\mb v_{m+n-1}).$$
Finally, permute the two first columns to obtain:
$$L'''=(-e_1+e_{m+n-1}\mb\mb e_{i_2}\mb e_{i_3}\cdots\mb e_{i_l}\mb v_{l+1}\cdots\mb v_{m+n-1}).$$
$L'''$ is a matrix of the form
$$
\begin{pmatrix}
*	&  A \\
1 	& 0	
\end{pmatrix},
$$
where $A$ is a square matrix whose columns are taken from $\lmu$. By induction $\det L$ is $0$, $1$ or $-1$.
\end{enumerate}
In view of 1. we can assume that $v_1,v_{l+1},\ldots,v_{m+n-1}$ belong to 
$$(\{e_1,\ldots,e_m\}\setminus\{e_{i_2},\ldots,e_{i_l}\})\cup\{e_{m+1},\ldots,e_{m+n-2}\}\cup B_{m+1}\cup\cdots\cup B_{m+n-2}.$$
For $j\in\{1,\ldots,n-2\}$ denote
\begin{align}
B^+_{m+j}&=\{-e_1+e_{i_2}+e_{m+j},\ldots,-e_1+e_{i_l}+e_{m+j}\},\notag\\
B^-_{m+j}&=B_{m+j}\setminus B^+_{m+j}.\notag
\end{align}
\begin{itemize}
\item[3.] Suppose $v_i\in B^+_{m+1}\cup\cdots\cup B^+_{m+n-2}$ for some $i\in\{1,l+1,\ldots,m+n-1\}$. For simplicity of notation assume $v_1=-e_1+e_{i_2}+e_{m+j}$.
In $L$ replace $v_1$ by $v_1-v_2=e_{m+j}-e_{m+n-1}$. We obtain the following matrix
$$L'=(e_{m+j}-e_{m+n-1}\mb v_2\mb\cdots\mb v_l \mb v_{l+1}\mb\cdots\mb v_{m+n-1}).$$
For each $k\in\{2,\ldots,l\}$ replace $v_k$ by $v_k+(e_{m+j}-e_{m+n-1})=-e_1+e_{i_k}+e_{m+j}$. We obtain the matrix
$$L''=(e_{m+j}-e_{m+n-1}\mb -e_1+e_{i_2}+e_{m+j}\mb\cdots\mb -e_1+e_{i_l}+e_{m+j}\mb v_{l+1}\mb\cdots\mb v_{m+n-1}).$$
$L''$ is a matrix of the form
$$
\begin{pmatrix}
*	&  A \\
-1 	& 0	
\end{pmatrix},
$$
where $A$ is a square matrix whose columns are taken from $\lmu$. We conclude by induction.
\end{itemize}
In view of 3. we can assume that $v_1,v_{l+1},\ldots,v_{m+n-1}$ belong to 
$$W_0=(\{e_1,\ldots,e_m\}\setminus\{e_{i_2},\ldots,e_{i_l}\})\cup \{e_{m+1},\ldots,e_{m+n-2}\}\cup B^-_{m+1}\cup\cdots\cup B^-_{m+n-2}.$$
Let $W\subset\R^{m+n-1}$ be the vector space generated by $W_0$. Notice that a basis for $W$ is
$$(\{e_1,e_2,\ldots,e_m\}\setminus\{e_{i_2},\ldots,e_{i_l}\})\cup \{e_{m+1},\ldots e_{m+n-2}\}.$$
Hence $\dim_{\R}W=m-(l-1)+(n-2)=(m-l)+(n-1)$. Since $|\{v_1,v_{l+1},\ldots,v_{m+n-1}\}|=m+n-1-l+1=(m-1)+n$, we conclude that $\{v_1,v_{l+1},\ldots,v_{m+n-1}\}$ is linearly dependent. Hence $\det L=0$. This concludes the proof of Case (IV).

\end{proof}

\section*{Acknowledgements}

The second author would like to thank for the great hospitality received from ICMC USP and UFSCar, Sao Carlos, during the visit in which this project started.



\begin{thebibliography}{XXX}
\addcontentsline{toc}{section}{\numberline{References}}
\bibitem{ACGH}Arbarello, E., Cornalba, M., Griffiths, P. A., Harris, J.; \textit{Geometry of algebraic curves, Vol. I}, 267, Grundlehren der Mathematischen Wissenschaften, Springer-Verlag, New York, 1985.
\bibitem{At}Atanasov, A., Lopez, C., Perry, A., Proudfoot, N., Thaddeus, M.; \textit{Resolving toric varieties with Nash blow-ups}, Experimental Math. \textbf{20} (2011), no. 3, 288-303.
\bibitem{BV}Bruns, W., Vetter, U.; \textit{Determinantal Rings}, Springer-Verlang, New York, 1998.
\bibitem{Cha}Chachapoyas, N.; \textit{Invariantes de variedades determinantais}, Thesis, ICMC-USP (2014).
\bibitem{ChDG}Ch\'avez-Mart\'inez, E., Duarte, D., Giles Flores, A.; \textit{A higher-order tangent map and a conjecture on the higher Nash blowup of curves}, Math. Z., Vol. 297, (2021), 1767-1791.
\bibitem{CLS}Cox, D., Little, J., Schenck, H.; \textit{Toric Varieties}, Graduate Studies in Mathematics, Volume 124, AMS, 2011.
\bibitem{D1}Duarte, D.; \textit{Nash modification on toric surfaces}, Revista de la Real Academia de Ciencias Exactas, F\'isicas y Naturales, Serie A Matem\'aticas, Vol. 108, No. 1, (2014), pp 153-171.
\bibitem{DG}Duarte, D., Green Tripp, D.; \textit{Nash modification on toric curves}, Singularities, Algebraic Geometry, Commutative Algebra, and Related Topics, Springer Nature Switzerland AG, G.-M. Greuel, L. Narv\'aez Macarro, S. Xamb\'o-Descamps (eds), DOI: 10.1007/978-3-319-96827-8-8, (2018), 191-202.
\bibitem{DJNB}Duarte, D., Jeffries, J., N\'u\~nez-Betancourt, L.; \textit{Nash blowups of toric varieties in prime characteristic}, Collectanea Mathematica, doi:10.1007/s13348-023-00402-y, published online, (2023).
\bibitem{DN1} Duarte, D., N\'u\~nez-Betancourt, L.; \textit{Nash blowups in positive characteristic}, Revista Matem\'atica Iberoamericana, electronically published, DOI: 10.4171/RMI/1278 (2021).
\bibitem{EH}Eagon, J. A., Hochster, M.; \textit{Cohen-Macaulay rings, invariant theory, and the
generic perfection of determinantal loci}, Amer. J. Math. 93 (1971), 1020–1058.
\bibitem{EGZ}Ebeling, W., Gusein-Zade, S. M.; \textit{On the indices of 1-forms on determinantal singularities}, Tr. Mat. Inst. Steklova, 267 (2009) 119–131.
\bibitem{GS1}Gonzalez-Sprinberg, G.; \textit{Eventails en dimension 2 et transform\'{e} de Nash}, Publ. de l'E.N.S., Paris (1977), 1-68.
\bibitem{GS2}Gonzalez-Sprinberg, G.; \textit{R\'{e}solution de Nash des points doubles rationnels}, Ann. Inst. Fourier, Grenoble \textbf{32}, 2 (1982), 111-178.
\bibitem{GS3}Gonzalez-Sprinberg, G.; \textit{On Nash blow-up of orbifolds}, Adv. Studies in Pure Math. 56 (2009), Singularities-Niigata-Toyama 2007, 133-149.
\bibitem{GT}Gonz\'alez Perez, P. D., Teissier, B.; \textit{Toric geometry and the Semple-Nash modification}, Revista de la Real Academia de Ciencias Exactas, F\'isicas y Naturales, Serie A, Matem\'aticas, Vol. 108, Issue 1, (2014), 1-48.
\bibitem{GM}Grigoriev, D., Milman, P.; \textit{Nash resolution for binomial varieties as Euclidean division. A priori termination bound, polynomial complexity in essential dimension 2}, Advances in Mathematics, Vol. 231, (2012), pp 3389-3428.
\bibitem{Hi}Hironaka, H.; \textit{On Nash blowing-up}, Arithmetic and Geometry II, Progr. Math., Vol 36, Birkhauser Boston, Mass., (1983), pp 103-111.
\bibitem{LJ-R}Lejeune-Jalabert, M., Reguera, A.; \textit{The Denef-Loeser series for toric surfaces singularities}, Proceedings of the International Conference on Algebraic Geometry and Singularities (Spanish) (Sevilla, 2001), Rev. Mat. Iberoamericana, Vol. 19, (2003), 581-612.
\bibitem{M2}Grayson, D., Stillman, M.; Macaulay2, a software system for research in algebraic geometry, available at \url{http://www.math.uiuc.edu/Macaulay2/}.
\bibitem{No}Nobile, A.; \textit{Some properties of the Nash blowing-up}, Pacific Journal of Mathematics, \textbf{60}, (1975), 297-305.
\bibitem{O}Oda, T.; \textit{Convex bodies and algebraic geometry}, Ergebnisse der Mathematik und ihrer Grenzgebiete (3), Vol. 15, Springer-Verlag, Berlin, 1988.
\bibitem{Ox}Oxley, J.; \textit{Matroid theory}, Oxford Graduate Texts in Mathematics, 2nd. Edition, Vol. 21, 2011.
\bibitem{R}Rebassoo, V.; \textit{Desingularisation properties of the Nash blowing-up process}, Thesis, University of Washington (1977).
\bibitem{S}Semple, J. G.; \textit{Some investigations in the geometry of curve and surface elements}, Proc. London Math. Soc. (3) \textbf{4} (1954), pp 24-49.
\bibitem{Sp}Spivakovsky, M.; \textit{Sandwiched singularities and desingularisation of surfaces by normalized Nash transformations}, Ann. of Math. (2), Vol. 131, No. 3, (1990), pp 411-491.
\bibitem{St}Sturmfels, B.; \textit{Gr\"obner Bases and Convex Polytopes}, University Lecture Series, Vol. 8, AMS, Providence, RI, 1996.
\end{thebibliography}
\end{document}